\numberwithin{equation}{section}
\DeclareMathOperator{\supp}{supp}
\newcommand{\MR}{\textit{MR}}
\newcommand{\CP}{\!\textit{CP}}
\newcommand{\K}{\mathds{K}}
\newcommand{\R}{\mathds{R}}
\newcommand{\C}{\mathds{C}}
\newcommand{\N}{\mathds{N}}
\newcommand{\A}{\mathcal{A}}
\newcommand{\B}{\mathcal{B}}
\newcommand{\Conv}{\mathcal{C}}
\renewcommand{\L}{\mathcal{L}}
\renewcommand{\d}{\mathrm{d}}
\renewcommand{\Re}{\operatorname{Re}}
\newcommand{\fra}{\mathfrak{a}}
\newcommand{\frb}{\mathfrak{b}}
\newcommand{\frc}{\mathfrak{c}}
\renewcommand{\mid}{\, \vert \,}
\DeclarePairedDelimiter\abs{\lvert}{\rvert}
 \DeclarePairedDelimiter\norm{\lVert}{\rVert} 
\theoremstyle{plain}
\newtheorem{theorem}{Theorem}[section]
\newtheorem{proposition}[theorem]{Proposition}
\newtheorem{lemma}[theorem]{Lemma}
\theoremstyle{definition}
\newtheorem{definition}[theorem]{Definition}
\theoremstyle{remark}
\newtheorem{remark}[theorem]{Remark}
\begin{document}
\title{Invariance of  Convex Sets for Non-autonomous Evolution Equations Governed by Forms}
\author{
    Wolfgang Arendt,
    Dominik Dier,
    El Maati Ouhabaz\footnote{Corresponding author.}
}



\maketitle

\begin{abstract}\label{abstract}
We consider a non-autonomous form $\fra:[0,T]\times V\times V \to \C$
where $V$ is a Hilbert space which is densely and continuously embedded in another Hilbert space $H$.
Denote by $\A(t) \in \L(V,V')$ the associated operator. Given  $f \in L^2(0,T, V')$,  
one knows that for each $u_0 \in H$ there is a unique solution $u\in H^1(0,T;V')\cap L^2(0,T;V)$ of
$$\dot u(t) + \A(t) u(t) = f(t),  \, \, u(0) = u_0.$$
This result by J.~L.~Lions is well-known.
The aim of this article is to find a criterion for the invariance of a closed convex subset $\Conv$ of $H$;
i.e.\ we give a criterion on the form which implies that $u(t)\in \Conv$ for all $t\in[0,T]$ whenever $u_0\in\Conv$.
In the autonomous case for $f = 0$,  the criterion is known and even equivalent to invariance  by a result  proved  in \cite{Ouh96} (see also \cite{Ouh05}).
We give applications to positivity and comparison of solutions to heat equations with non-autonomous Robin boundary conditions.
We also prove positivity of the solution to a quasi-linear heat equation. 

\end{abstract}

\bigskip
\noindent  
{\bf Key words:} Sesquilinear forms, non-autonomous evolution equations, non-linear heat equations, invariance of closed convex sets.\medskip

\noindent
\textbf{MSC:} 35K90, 35K59, 31D05.

\section{Introduction}\label{section:introduction}

The aim of this article is to prove an invariance criterion for evolution equations governed by a non-autonomous form.
Throughout the article we consider the following situation.
Let $V,H$ be Hilbert spaces over $\K=\R$ or $\C$ such that $V \stackrel d \hookrightarrow H$; i.e., $V$ is densely and continuously embedded in $H$. 
For $T > 0$, we consider  
\[\fra:[0,T]\times V\times V \to \K\]
 such that $\fra(t, ., .)$ is sesquilinear on $V$ 
and satisfies  the following conditions
\begin{align}
	\label{eq:V_bounded}
	&\abs{\fra(t,u,v)} \le M \norm u_V \norm v_V \quad &(t\in [0,T], u,v \in V)\\
	\label{eq:quasi-coercive}
	&\Re \fra(t,u,u) + \omega \norm u_H^2 \ge \alpha \norm u _V^2 \quad &(t\in[0,T], u \in V)\\
	\label{eq:measurable}
	&\fra(.,u,v) \text{ is measurable for all } u,v \in V
\end{align}
In the case where $\K=\R$ this means that $\fra(t,.,.)$ is bilinear;
moreover the real part sign $\Re$ can be omitted in \eqref{eq:quasi-coercive}
and everywhere else in the sequel.
Condition $\eqref{eq:V_bounded}$ means that $\fra(t,.,.)$ is \emph{V-bounded}  with $t$-independent bound,
we call condition $\eqref{eq:quasi-coercive}$ \emph{quasi-coercivity} and   simply \emph{coercivity} if $\omega =0$.
We call such $\fra$, satisfying $\eqref{eq:V_bounded}$-$\eqref{eq:measurable}$ simply a 
\emph{non-autonomous closed form} on $H$.
Define $\A(t) \in \L(V,V')$ by $\langle \A(t)u,v \rangle = \fra(t,u,v)$.
Let $\MR(V,V') := H^1(0,T;V') \cap L^2(0,T;V)$ be the usual maximal regularity space. 
By a theorem due to Lions \cite{Lio61} (see also \cite{Sho97}, Chap. III)  for each $u_0 \in H$, $f\in L^2(0,T;V')$ there exists a unique
$u \in \MR(V,V') $ satisfying 
\begin{equation}\label{eq:solution_of_CP}
\left\{
\begin{aligned}
	&\dot u(t) + \A(t)u(t)=f(t) \quad t\text{-a.e.}\\
	&u(0)=u_0.
\end{aligned}
\right.
\end{equation}
It is well known that $\MR(V,V') \subset C([0,T];H)$ and hence $u$ has a unique continuous representative.
Thus the initial condition makes  sense.\\
Let $\Conv \subset H$ be a closed convex set and denote by $P:H \to \Conv$ the orthogonal projection onto $\Conv$.
Our main result,  Theorem~\ref{thm:invariance},  says the following:
If $u_0 \in \Conv$ and 
\begin{equation}\label{eq:invariance}
P(V) \subseteq V, \hspace{.2cm}  	\Re \fra(t, Pv, v-Pv) \ge  \Re \langle{f(t)}, {v-Pv}\rangle
\end{equation}
for all $v \in V$, then $u(t)\in\Conv$ for $t \in [0,T]$.
For $f \neq 0$ this criterion seems to be new even in the autonomous case.
If $f=0$, then in the autonomous case condition $\eqref{eq:invariance}$ is also necessary for the invariance of $\Conv$. 
The criterion in this autonomous setting is due to \cite{Ouh96} and it is widely used to study positivity, $L^p-$contractivity and  domination for various semigroups. This criterion  is in the spirit of the famous Beurling-Deny criteria which characterize the sub-Markovian property of a semigroup in terms of the corresponding  form. As a corollary of our result,  if we choose the convex set to be the positive cone we characterize positivity of the solution $u$ of the Cauchy problem \eqref{eq:solution_of_CP} if  the initial date $u_0$ and the non-homogeneous term $f$ are positive. 
This corollary is also stated in  \cite[Chap.\ XVIII, § 5]{DL88} however with an erroneous proof. 
Other corollaries  concern a characterization of the sub-markovian property of the solution $u$ as well as comparison of solutions 
$u$ and $v$ of two different Cauchy problems, see Section \ref{section:positivity}.  
In Section~\ref{section:applications} some concrete examples are given.
We consider non-autonomous Robin boundary conditions and also parabolic equations with time dependent coefficients.  In this concrete setting we prove positivity and characterize comparison. We also consider  a quasi-linear problem for which we prove existence of a positive solution.

\subsection*{Acknowledgment}
The authors obtained diverse financial support which they gratefully acknowledge: 
D.\ Dier is a member of the DFG Graduate School 1100: Modelling, Analysis and Simulation in Economathematics, 
E.\ M.\ Ouhabaz visited the University of Ulm in the framework of the Graduate School: 
Mathematical Analysis of Evolution, Information and Complexity financed by the Land Baden-Württemberg and 
W.\ Arendt and D.\ Dier enjoyed a wonderful research stay at the University of Bordeaux.\\
The research of E.\ M.\ Ouhabaz  is partly supported by the ANR project  "Harmonic Analysis at its Boundaries",  ANR-12-BS01-0013-02.

\section{Invariance of closed convex sets}\label{section:invariance}

Let $V$ and $H$ be separable Hilbert spaces over $K=\R$ or $\C$ such that $V \stackrel
d \hookrightarrow H$ (by this we mean that $V$ is continuously and densely embedded in $H$). We keep the same notation as in the introduction and use 
$(. \mid .)_H$, $(. \mid .)_V$, $\norm{.}_H$ and $\norm{.}_V$ for their scalar products  and norms and denote by $\langle ., . \rangle$
the duality between $V'$ and $V$.  We consider a non-autonomous closed form
    \[
        \fra:[0,T]\times V \times V \to \K.
    \]
As in the introduction we denote by $\A(t) \in
\L(V,V')$ the operator associated with the form $\fra$. Given $f \in
L^2(0,T;V')$ and $u_0 \in H$ we have seen in $\eqref{eq:solution_of_CP}$ that the Cauchy problem
\begin{equation*}
    (\CP_f) \left\{ 
	\begin{aligned} 
		&\dot u(t) + \A(t)u(t)= f(t)\quad \text{a.e.}\\ 
		&u(0) = u_0 
	\end{aligned} \right.
\end{equation*}
has a unique solution $u \in \MR(V,V') = H^1(0,T;V') \cap
L^2(0,T;V)$.

In this section we study invariance properties of the solution
$u$. To make this precise, let us fix a closed convex set
$\Conv$ of $H$. We introduce the following definition.

\begin{definition} Given $f\in L^2(0,T;V')$,  we say that the
     convex set $\Conv$ is \emph{invariant} for the Cauchy problem $(\CP_f)$
     if for each $u_0\in \Conv$ the solution $u$ of $(\CP_f)$ satisfies
     $u(t)\in \Conv$ for all $t \in [0,T]$.
   \end{definition}

Recall that the solution of $(\CP_f)$ is in $C([0,T];H)$.
Our aim is to provide a criterion in terms of $\fra(t, . , .)$
and $f$ for this invariance property. As an application we obtain
positivity and $L^p$-contractivity for the Cauchy problem
$(\CP_f)$. In the autonomous case, criteria in
terms of the form that characterize this property are given in 
\cite{Ouh96}.

Let $P: H \to \Conv$ be the orthogonal projection onto $\Conv$;
i.e.\ for $x\in H$, $Px$ is the unique element $x_\Conv$ in $\Conv$ such
that
\begin{equation}\label{eq:characterization_orthogonal_projection}
	\Re ({x-x_\Conv} \mid {y-x_\Conv} )_H \le 0
\end{equation}
for all $y\in \Conv$.
In the  autonomous case the invariance of $V$ under $P$ is a necessary
condition for invariance of $\Conv$ \cite[Theorem 2.1]{Ouh96}. Thus we assume throughout
that
\begin{equation}\label{eq:V_invariant}
    P(V) \subset V.
\end{equation}
Our main result in this section is the following.

\begin{theorem}\label{thm:invariance}
    Let $\fra$ be a  non-autonomous closed  form on $V$.
    Let $f \in L^2(0,T;V')$ and let $\Conv$ be a closed convex subset of $H$ such that $\eqref{eq:V_invariant}$
    is satisfied. Then $\Conv$ is invariant for $(\CP_f)$ provided
    \begin{equation}\label{eq:invariance_criterium}
        \Re \fra(t, Pv, v-Pv) \ge  \Re \langle{f(t)}, {v-Pv}\rangle
    \end{equation}
    for all $v \in V$ and a.e.\ $t \in [0,T]$.
\end{theorem}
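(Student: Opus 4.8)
The plan is to show that the nonnegative function $\phi(t) := \tfrac12\norm{u(t)-Pu(t)}_H^2 = \tfrac12\,d(u(t),\Conv)^2$ vanishes identically. Since $u_0\in\Conv$ gives $\phi(0)=0$, and $\phi(t)=0$ is equivalent to $u(t)=Pu(t)\in\Conv$, this is exactly the asserted invariance. The guiding idea is to differentiate $\phi$ and derive a Gronwall inequality $\phi'\le 2\omega\phi$, with $\omega$ the constant from quasi-coercivity \eqref{eq:quasi-coercive}. Here I use that $g(x):=\tfrac12\norm{x-Px}_H^2$ is convex and continuously differentiable on $H$ with gradient $\nabla g(x)=x-Px$ (a standard property of the metric projection onto a closed convex set, $I-P$ being firmly nonexpansive); in particular $g$ obeys the subgradient inequality $g(y)\ge g(x)+\Re(x-Px\mid y-x)_H$ for all $x,y\in H$.

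The technical heart, which I expect to be the main obstacle, is a chain rule: I claim $\phi\in W^{1,1}(0,T)$ with
\[
  \phi'(t)=\Re\langle \dot u(t),\,u(t)-Pu(t)\rangle \quad\text{for a.e. } t,
\]
the pairing being legitimate because $u(t)\in V$ for a.e.\ $t$ and $P(V)\subseteq V$ force $u(t)-Pu(t)\in V$, while $\dot u(t)\in V'$. To prove this I would apply the subgradient inequality to $x=u(t),\,y=u(t+h)$ and to $x=u(t+h),\,y=u(t)$, obtaining the two-sided estimate $\Re(u(t)-Pu(t)\mid u(t+h)-u(t))_H \le \phi(t+h)-\phi(t)\le \Re(u(t+h)-Pu(t+h)\mid u(t+h)-u(t))_H$. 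Using the Gelfand identification $(h\mid v)_H=\langle h,v\rangle$ for $h\in H,\ v\in V$ together with $u(t+h)-u(t)=\int_t^{t+h}\dot u$, the lower bound divided by $h$ tends to $\Re\langle\dot u(t),u(t)-Pu(t)\rangle$ at a.e.\ $t$. For the upper bound I would split $u(t+h)-Pu(t+h)=\bigl(u(t)-Pu(t)\bigr)+\bigl[(I-P)u(t+h)-(I-P)u(t)\bigr]$: the first part produces the same limit, while the second yields a term dominated by $\tfrac1h\norm{u(t+h)-u(t)}_H^2$ (monotonicity of $I-P$). The decisive point is that $\tfrac1h\norm{u(t+h)-u(t)}_H^2\to 0$ for a.e.\ $t$, which I would obtain from the fundamental identity for $\MR(V,V')$ that $t\mapsto\tfrac12\norm{u(t)}_H^2$ is absolutely continuous with derivative $\Re\langle\dot u(t),u(t)\rangle$. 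Absolute continuity of $\phi$ comes from the Lipschitz bound $\abs{\phi(t)-\phi(s)}\le L\norm{u(t)-u(s)}_H$ with $L=\sup_r\norm{u(r)-Pu(r)}_H<\infty$. The genuine difficulty throughout is that $P$ is nonlinear and a priori continuous only on $H$, not on $V$, so one cannot simply approximate $u$ by smooth functions and pass to the limit in $Pu_n$; the two-sided subgradient argument is designed precisely to avoid differentiating $t\mapsto Pu(t)$.

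Granting the chain rule, I would substitute $\dot u(t)=f(t)-\A(t)u(t)$ and $\langle\A(t)u(t),w\rangle=\fra(t,u(t),w)$ with $w=u(t)-Pu(t)$, giving $\phi'(t)=\Re\langle f(t),u(t)-Pu(t)\rangle-\Re\fra(t,u(t),u(t)-Pu(t))$ a.e. Splitting the first slot as $\fra(t,u(t),\cdot)=\fra(t,Pu(t),\cdot)+\fra(t,u(t)-Pu(t),\cdot)$ and applying hypothesis \eqref{eq:invariance_criterium} with $v=u(t)$ makes the combination $\Re\langle f(t),u(t)-Pu(t)\rangle-\Re\fra(t,Pu(t),u(t)-Pu(t))$ nonpositive, leaving $\phi'(t)\le -\Re\fra\bigl(t,u(t)-Pu(t),u(t)-Pu(t)\bigr)$. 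Quasi-coercivity \eqref{eq:quasi-coercive} then bounds the right-hand side by $\omega\norm{u(t)-Pu(t)}_H^2=2\omega\,\phi(t)$.

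Finally, $\phi$ is absolutely continuous, nonnegative, satisfies $\phi'\le 2\omega\phi$ a.e., and $\phi(0)=0$; Gronwall's lemma (equivalently, monotonicity of $t\mapsto e^{-2\omega t}\phi(t)$) yields $0\le\phi(t)\le e^{2\omega t}\phi(0)=0$, so $\phi\equiv 0$ and $u(t)\in\Conv$ for every $t\in[0,T]$, as required.
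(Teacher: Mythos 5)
Your pointwise analysis is largely sound --- the two-sided subgradient sandwich, the firm nonexpansiveness of $I-P$, and the fact that $\tfrac 1h\norm{u(t+h)-u(t)}_H^2\to 0$ a.e.\ (provable from the Lions energy identity together with $L^2$-Lebesgue points of $u$ in $V$ and of $\dot u$ in $V'$) --- but there is a genuine gap at the step that makes Gronwall applicable: you never establish that $\phi$ is absolutely continuous, and the justification you propose fails. The bound $\abs{\phi(t)-\phi(s)}\le L\norm{u(t)-u(s)}_H$ is correct, but it only transfers the \emph{modulus of continuity} of $u$ as an $H$-valued map, and functions in $\MR(V,V')$, while continuous into $H$, are in general not even of bounded variation into $H$: already for the diagonal form $\fra(u,v)=\sum_k\lambda_k u_k v_k$ on $\ell^2$ with $\lambda_k=4^k$ and $u_0=(c_k)$, $c_k=(\sqrt k\,\log k)^{-1}$, one has $u\in\MR(V,V')$ while $\int_0^T\norm{\dot u(t)}_H\,\d t\ge e^{-2}\sum_k c_k=\infty$ (the disjoint intervals $[\lambda_k^{-1},2\lambda_k^{-1}]$ each contribute at least $e^{-2}c_k$). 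So your argument delivers only: $\phi$ continuous, differentiable a.e.\ with $\phi'\le 2\omega\phi$ a.e., $\phi(0)=0$ --- and that is not enough. The Cantor function (continuous, derivative $0$ a.e., nonconstant) shows that an a.e.\ differential inequality without absolute continuity has no integral consequence, and Dini-derivative versions of Gronwall tolerate only countable exceptional sets, whereas your exceptional set (non-Lebesgue points, points with $u(t)\notin V$) is merely null. A secondary omission of the same nature: you never check that your candidate derivative $\Re\langle\dot u,u-Pu\rangle$ lies in $L^1(0,T)$, which requires $u-Pu\in L^2(0,T;V)$; the mere pointwise inclusion $u(t)-Pu(t)\in V$ a.e.\ does not give this, since $P$ restricted to $V$ carries no a priori bound.

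Both missing pieces are exactly what the paper's Lemma~\ref{lem:invariance} supplies: the integrated identity $\norm{u(t)-Pu(t)}_H^2-\norm{u(r)-Pu(r)}_H^2=2\int_r^t\Re\langle\dot u(s),u(s)-Pu(s)\rangle\,\d s$, i.e.\ precisely the absolute continuity of $\phi$. Interestingly, the paper proves it by the smooth-approximation route you dismissed: take $u_n\in C^\infty([0,T];V)$ with $u_n\to u$ in $\MR(V,V')$; since $P$ is a contraction on $H$, $Pu_n\in H^1(0,T;H)$ (Theorem~\ref{thm:appendix}), and \eqref{eq:characterization_orthogonal_projection} forces the pointwise orthogonality $\Re\big((Pu_n)\dot{}\,(s)\mid u_n(s)-Pu_n(s)\big)_H=0$, so one never differentiates $Pu$ along the limit function --- the difficulty you designed the sandwich to avoid is neutralized instead by this orthogonality. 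The limit passage is then secured by an a priori estimate extracted from the hypothesis \eqref{eq:invariance_criterium} itself combined with quasi-coercivity and $V$-boundedness: $\alpha\norm{v-Pv}_V^2\le M\norm v_V\norm{v-Pv}_V+\norm{f(t)}_{V'}\norm{v-Pv}_V+\omega\norm{v-Pv}_H^2$, which makes $(u_n-Pu_n)$ bounded in $L^2(0,T;V)$, hence weakly convergent to $u-Pu$ there, pairing with $\dot u_n\to\dot u$ strongly in $L^2(0,T;V')$; the same estimate applied to $v=u(t)$ is what gives the $L^1$ integrability you are missing. If you graft this lemma (or any other proof of the integral identity) into your argument, the remainder --- substituting the equation, splitting $\fra(t,u,\cdot)=\fra(t,Pu,\cdot)+\fra(t,u-Pu,\cdot)$, invoking \eqref{eq:invariance_criterium}, quasi-coercivity and Gronwall --- coincides with the paper's proof of Theorem~\ref{thm:invariance}.
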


The following lemma is crucial for the proof of Theorem
\ref{thm:invariance}.

\begin{lemma}\label{lem:invariance}
    Let $ u \in \MR(V,V')$. Then for  $t,r \in [0,T]$ with $r \le t$ the following identity holds:
    \[
        \norm{u(t)- Pu(t)}_H^2 - \norm{u(r)-Pu(r)}_H^2
            = 2 \int_r^t \Re \langle{\dot u(s)}, {u(s)-Pu(s)}\rangle \ \d s
    \]
\end{lemma}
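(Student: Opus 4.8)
The plan is to establish the increment formula directly, for every pair $r\le t$, by combining the variational characterisation \eqref{eq:characterization_orthogonal_projection} of the projection with the classical Lions energy identity
\[
    \norm{w(t)}_H^2 - \norm{w(r)}_H^2 = 2\int_r^t \Re\langle \dot w(s), w(s)\rangle\,\d s \qquad (w\in\MR(V,V')),
\]
applied not to $u$ itself but to the shifts $u-c$ by constant vectors $c\in V$. Writing $\phi(s):=\norm{u(s)-Pu(s)}_H^2=\operatorname{dist}_H(u(s),\Conv)^2$, the point is that each such shift lies again in $\MR(V,V')$, so Lions turns every increment into an integral of $\dot u$ against a constant, and such an integral collapses to a pure $H$ inner product via $\int_{s_{i-1}}^{s_i}\langle \dot u(s),c\rangle\,\d s=(u(s_i)-u(s_{i-1})\mid c)_H$ (valid since $u(s_i)-u(s_{i-1})\in H$ and $c\in V$). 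This is what lets me avoid differentiating the projection.

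First I would fix a partition $r=s_0<s_1<\dots<s_n=t$ whose nodes are chosen among the (a.e.) times at which $u(s_i)\in V$, so that $Pu(s_i)\in V$ by the standing hypothesis \eqref{eq:V_invariant}. Since $Pu(s_{i-1}),Pu(s_i)\in\Conv$ are admissible competitors in the nearest-point problems at $s_i$ and $s_{i-1}$, \eqref{eq:characterization_orthogonal_projection} gives $\phi(s_i)\le\norm{u(s_i)-Pu(s_{i-1})}_H^2$ and $\phi(s_{i-1})\le\norm{u(s_{i-1})-Pu(s_i)}_H^2$. Subtracting, applying Lions to $u-Pu(s_{i-1})$ and to $u-Pu(s_i)$, reducing the constant parts to $H$ inner products as above, and summing, I obtain the two-sided estimate
\[
    \norm{u(t)}_H^2-\norm{u(r)}_H^2-2A_n^+ \;\le\; \phi(t)-\phi(r) \;\le\; \norm{u(t)}_H^2-\norm{u(r)}_H^2-2A_n^-,
\]
where $A_n^-:=\sum_i \Re(u(s_i)-u(s_{i-1})\mid Pu(s_{i-1}))_H$ and $A_n^+:=\sum_i \Re(u(s_i)-u(s_{i-1})\mid Pu(s_i))_H$ use the left- and right-endpoint projections respectively.

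Finally I would let the mesh shrink to $0$. Their difference $A_n^+-A_n^-=\sum_i\Re(u(s_i)-u(s_{i-1})\mid Pu(s_i)-Pu(s_{i-1}))_H$ is nonnegative (firm non-expansiveness of $P$) and bounded by the quadratic-variation sum $\sum_i\norm{u(s_i)-u(s_{i-1})}_H^2$, which tends to $0$ as the mesh shrinks because the extra regularity $\dot u\in L^2(0,T;V')$ of $\MR(V,V')$ functions forces finite and vanishing quadratic variation in $H$. Hence the sandwich pinches, and identifying the common limit $\lim A_n^\pm=\int_r^t\Re\langle \dot u(s),Pu(s)\rangle\,\d s$ together with Lions applied to the $u$-term assembles the claimed identity.

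I expect the genuine difficulty to be exactly this last convergence $A_n^-\to\int_r^t\Re\langle\dot u,Pu\rangle\,\d s$. The step functions $s\mapsto Pu(s_{i-1})$ converge to $Pu$ only in $H$ — indeed uniformly, since $I-P$ is $H$-non-expansive and $u\in C([0,T];H)$ — yet they are paired against $\dot u(s)\in V'$, where $H$-closeness is not enough. To bridge this I would split $\dot u=h+(\dot u-h)$ with $h\in L^2(0,T;H)$ close to $\dot u$ in $L^2(0,T;V')$: the $h$-part is then controlled purely in $H$ by the uniform modulus of continuity of $u$, and the remainder by Cauchy--Schwarz in the $V'/V$ duality, provided $u-Pu\in L^2(0,T;V)$. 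Making sense of and bounding the sampled projections in the $V$-norm uniformly in the partition is precisely where the hypothesis $P(V)\subseteq V$ must be exploited in full strength; this is the crux of the argument, the rest being the bookkeeping indicated above.
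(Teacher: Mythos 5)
Your sandwich architecture is structurally sound and genuinely different from the paper's route: the paper mollifies $u$ itself, uses the Lipschitz chain rule (Theorem~\ref{thm:appendix}) to get $Pu_n\in H^1(0,T;H)$, kills the term $\Re((Pu_n)\dot{}\,(s)\mid u_n(s)-Pu_n(s))_H$ a.e.\ by the two-sided sign argument on difference quotients, and then passes to the limit; you instead avoid differentiating the projection altogether by freezing $Pu(s_{i-1})$, $Pu(s_i)$ as competitors in \eqref{eq:characterization_orthogonal_projection}, applying the Lions energy identity to the shifts $u-c$, telescoping, and controlling $A_n^+-A_n^-$ by firm nonexpansiveness. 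However, exactly the step you flag as the crux is a genuine gap, and you point to the wrong hypothesis to close it. The inclusion $P(V)\subseteq V$ is purely qualitative: $P$ is nonlinear, so no closed-graph or uniform-boundedness principle converts it into an estimate of the type $\norm{Pv}_V\le C(1+\norm v_V)$, and without some quantitative $V$-bound neither your control of $\norm{(Pu)_n-Pu}_{L^2(0,T;V)}$ uniformly in the partition, nor even the finiteness of the right-hand side $\int_r^t\Re\langle\dot u(s),u(s)-Pu(s)\rangle\,\d s$, is available --- a priori one only knows $u(s)-Pu(s)\in V$ for a.e.\ $s$, not $u-Pu\in L^2(0,T;V)$, so the duality pairing need not be integrable.

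What the paper actually does at this point is to invoke the \emph{form}: although the lemma is stated for bare $u\in\MR(V,V')$, its proof is carried out under the standing hypotheses of Theorem~\ref{thm:invariance} and uses the invariance criterion \eqref{eq:invariance_criterium} together with quasi-coercivity \eqref{eq:quasi-coercive} and $V$-boundedness \eqref{eq:V_bounded} to derive the quantitative estimate \eqref{eq:V_norm_estimate},
\[
    \norm{w-Pw}_V^2 \le M'\left(\norm w_V^2+\norm{f(t)}_{V'}^2+\norm{w-Pw}_H^2\right),
\]
which is precisely the missing $V$-bound; it is a property of the pair (form, convex set, $f$), not of the projection alone. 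To close your argument you would have to import hypothesis \eqref{eq:invariance_criterium} and apply this estimate to your sampled values $u(s_i)$ --- at which point your route would become a legitimate, mollification-free alternative. A secondary gloss: vanishing $H$-quadratic variation along an \emph{arbitrary} refining partition is not automatic from $u\in\MR(V,V')$; what the regularity gives (e.g.\ via $\MR(V,V')\hookrightarrow H^{1/2}(0,T;H)$, or via the identity $\sum_i\norm{u(s_i)-u(s_{i-1})}_H^2=2\int_r^t\Re\langle\dot u,u-u_n^{\mathrm{left}}\rangle\,\d s$) is an averaged statement such as $\int\norm{u(s+h)-u(s)}_H^2\,\d s=o(\abs h)$, so you must select nodes or grid offsets (Lebesgue points of $u$ as a $V$-valued map, or averaging over translated grids) for which simultaneously the quadratic variation vanishes and the sampled $V$-norms are summable; this is fixable bookkeeping, but as written it is asserted rather than proved.
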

\begin{proof}
    We consider for simplicity the case $r=0$ and $t=T$.
    Recall that
    \begin{equation}\label{eq:Cinfty_dense_in_MR}
        C^\infty ([0,T];V) \text{ is dense in } \MR(V,V')
    \end{equation}
    and
    \begin{equation}\label{eq:MR_cont_emb_in_C}
        \MR(V,V') \hookrightarrow C([0,T];H).
    \end{equation}
    For these two properties, see e.g.\ \cite[Proposition III.1.2]{Sho97}. 
    By $\eqref{eq:Cinfty_dense_in_MR}$, there exists a sequence
    $(u_n)_{n \in \N} \subset C^\infty ([0,T];V)$ such that
    \begin{equation}\label{eq:convergency_in_MR}
            u_n \to u \text{ in } L^2(0,T;V) \text{ and } \dot u_n \to \dot u \text{ in } L^2(0,T;V').
    \end{equation}
    For each fixed $n$,
    \[
        \norm{u_n(T)- Pu_n(T)}_H^2 - \norm{u_n(0)-Pu_n(0)}_H^2
            = \int_0^T \frac{\d}{\d s} \norm{u_n(s)- Pu_n(s)}_H^2 \ \d s.
    \]
    Note that $P:H \to H$ is a contraction and hence $Pu_n \in H^1(0,T;H)$
    (see Theorem \ref{thm:appendix} in the Appendix). 
    Thus
    \begin{align*}
        &\norm{u_n(T)- Pu_n(T)}_H^2 - \norm{u_n(0)-Pu_n(0)}_H^2\\
            &\hspace{1cm}= 2 \int_0^T \Re ({\dot u_n(s)-(Pu_n)\dot{}\,(s)} \mid {u_n(s)-Pu_n(s)})_H \ \d s.
    \end{align*}
    Now for a.e.\ $s \in (0,T)$
    \begin{align*}
         &\Re ({(Pu_n)\dot{}\,(s)} \mid {u_n(s)-Pu_n(s)})_H \\
            &\hspace{1cm}= \lim_{h \to 0} \Re \tfrac 1 h ({Pu_n(s+h)-Pu_n(s)} \mid {u_n(s)-Pu_n(s)})_H
    \end{align*}
    Using $\eqref{eq:characterization_orthogonal_projection}$ we see that
    the right hand side is positive  for $h > 0$ and
    negative  for $h < 0$.
    Thus
    \[
         \Re ({(Pu_n)\dot{}\,(s)} \mid {u_n(s)-Pu_n(s)} )_H = 0.
    \]
    It follows that
    \begin{equation}\label{eq:lem_invariance}
        \begin{aligned}
            \norm{u_n(T)- &Pu_n(T)}_H^2 - \norm{u_n(0)-Pu_n(0)}_H^2\\
                &= 2 \int_0^T \Re ({\dot u_n(s)} \mid {u_n(s)-Pu_n(s)})_H \ \d s\\
                &= 2 \int_0^T \Re \langle{\dot u_n(s)}, {u_n(s)-Pu_n(s)}\rangle \ \d s.
        \end{aligned}
    \end{equation}
    By $\eqref{eq:MR_cont_emb_in_C}$ and the continuity of $P$ in $H$ it follows that
    the left hand side of $\eqref{eq:lem_invariance}$ converges to
    $\norm{u(T)- Pu(T)}_H^2 - \norm{u(0)-Pu(0)}_H^2$.

    Suppose $u_n-Pu_n$ is bounded in $L^2(0,T;V)$,
    then there exists a subsequence converging weakly to some function $g \in L^2(0,T;V)$.
    Moreover since $P:H \to H$ is a contraction and since $u_n \to u$ in $L^2(0,T;V)$ 
    also $u_n-Pu_n$ converges to $u-Pu$ in $L^2(0,T;H)$.
    Thus $g=u-Pu$ and by $\eqref{eq:convergency_in_MR}$
    a subsequence of the right hand side of $\eqref{eq:lem_invariance}$ converges to
    $2 \int_0^T \Re \langle{\dot u(s)}, {u(s)-Pu(s)}\rangle \, \d s$.

    Thus to prove the lemma it remains to show that $u_n-Pu_n$ is bounded in $L^2(0,T;V)$.
    By $\eqref{eq:invariance_criterium}$ we have
    \[
        \Re \fra(t, Pu_n(t), u_n(t)-Pu_n(t)) \ge  \Re \langle{f(t)}, {u_n(t)-Pu_n(t)}\rangle
    \]
    Hence by quasi-coercivity and $V$-boundedness of the form $\fra$
    \begin{align*}
        \alpha \norm{u_n&(t)-Pu_n(t)}_V^2\\
            &\le \Re \fra(t,u_n(t)-Pu_n(t),u_n(t)-Pu_n(t))
               + \omega \norm{u_n(t)-Pu_n(t)}_H^2\\
            &\le \Re \fra(t,u_n(t),u_n(t)-Pu_n(t))
                - \Re \langle{f(t)}, {u_n(t)-Pu_n(t)}\rangle\\
                &\quad + \omega \norm{u_n(t)-Pu_n(t)}_H^2\\
            &\le M \norm{u_n(t)}_V \norm{u_n(t)-Pu_n(t))}_V
                 + \norm{f(t)}_{V'} \norm{u_n(t)-Pu_n(t)}_{V}\\
                &\quad+ \omega \norm{u_n(t)-Pu_n(t)}_H^2.
    \end{align*}
    From this and the standard inequality
    \[
        a b \le \frac 1 {4\epsilon} a^2 + \epsilon b^2 \quad (\epsilon >0, a,b \in \R)
    \]
    we see that for some constant $M' >0$ (independent of $n$ and $t$)
    \begin{align}\label{eq:V_norm_estimate}
        \norm{u_n(t&)-Pu_n(t)}_V^2 \le
            M' \left( \norm{u_n(t)}^2_V + \norm{f(t)}^2_{V'} + \norm{u_n(t)-Pu_n(t)}_H^2 \right).
    \end{align}
    By $\eqref{eq:convergency_in_MR}$ the sequence $(u_n)_{n \in \N}$ is bounded in $L^2(0,T;V)$.
    Consequently,  it is bounded in $L^2(0,T;H)$. 
    Since $P$ is a contraction on $H$ the sequence $(u_n-Pu_n)_{n \in \N}$ is bounded in $L^2(0,T;H)$.
    It follows from $\eqref{eq:V_norm_estimate}$ that $(u_n-Pu_n)_{n \in \N}$ is a bounded sequence in $L^2(0,T;V)$.
\end{proof}

\begin{proof}[Proof of Theorem \ref{thm:invariance}]
    Fix $u_0 \in \Conv$. Our aim is to prove that
    $u(t) \in \Conv$ for all $t\in [0,T]$.
    By Lemma \ref{lem:invariance} and $\eqref{eq:V_invariant}$,  for all $t \in [0, T]$ we have 
    \begin{align*}
        \norm{u&(t)- Pu(t)}_H^2\\
            &= 2 \int_0^t \Re \langle{\dot u(s)}, {u(s)-Pu(s)} \rangle \ \d s\\
            &= 2 \int_0^t \Re \langle{-\A(s) u(s)+f(s)}, {u(s)-Pu(s)} \ \rangle \ \d s\\
            &= 2 \int_0^t \big[ - \Re \fra(s,u(s),u(s)-Pu(s))
                 + \Re \langle{f(s)}, {u(s)-Pu(s)} \rangle \big]  \ \d s\\
            &= 2 \int_0^t \big[ - \Re \fra(s,u(s)-Pu(s),u(s)-Pu(s)) \\
                &\quad - \Re \fra(s, Pu(s),u(s)-Pu(s)) + \Re \langle{f(s)}, {u(s)-Pu(s)}\rangle \big] \ \d s\\
            &\le 2 \int_0^t - \Re \fra(s,u(s)-Pu(s),u(s)-Pu(s)) \ \d s
    \end{align*}
    where we used the assumption $\eqref{eq:invariance_criterium}$ for  the last inequality.
    From quasi-coercivity of the form $\fra$ and the previous estimate we obtain
    \[
        \norm{u(t)- Pu(t)}_H^2 \le 2 \omega \int_0^t \norm{u(s)-Pu(s)}_H^2 \ \d s \quad (t\in[0,T]).
    \]
    We conclude by Gronwall's lemma that $\norm{u(t)- Pu(t)}_H^2 =0$ for all $t \in [0,T]$.
\end{proof}

The following extension  of Theorem  \ref{thm:invariance} is of interest in applications.  Let $\fra$ be as  before and  $f \in L^2(0, T, V')$.  Consider two closed convex sets 
  $\Conv_1$ and $ \Conv_2$ of $H$ and denote by 
$P_1$ and $P_2$ the orthogonal  projections onto $\Conv_1$ and $ \Conv_2$, respectively. 

\begin{theorem}\label{thm:invariance2} 
Suppose that $\Conv_1$ is invariant for $(CP_f)$ and that 
$$v \in V \cap \Conv_1 \ {\rm  implies}\  P_2 v \in V \ {\rm and}$$
\begin{equation}\label{1,2}
        \Re \fra(t, P_2v, v-P_2v) \ge  \Re \langle{f(t)}, {v-P_2v}\rangle
    \end{equation}
    for a.e.\ $t \in [0,T]$.
Then $\Conv_1 \cap \Conv_2$ is invariant for $(CP_f)$.
\end{theorem}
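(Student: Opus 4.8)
The plan is to reduce everything to the argument behind Theorem~\ref{thm:invariance}, but carried out with the projection $P_2$ and only along the trajectory of the solution. Fix $u_0 \in \Conv_1 \cap \Conv_2$ and let $u \in \MR(V,V')$ be the solution of $(\CP_f)$. Since $u_0 \in \Conv_1$ and $\Conv_1$ is invariant, we already know $u(t) \in \Conv_1$ for all $t \in [0,T]$. Because $u \in L^2(0,T;V)$ we also have $u(t) \in V$, hence $u(t) \in V \cap \Conv_1$, for a.e.\ $t$. Consequently the hypothesis \eqref{1,2} may be applied with $v = u(t)$: for a.e.\ $t$ we get $P_2 u(t) \in V$ and $\Re \fra(t, P_2 u(t), u(t) - P_2 u(t)) \ge \Re\langle f(t), u(t) - P_2 u(t)\rangle$. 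It then remains only to show that $u(t) \in \Conv_2$ for all $t$, for then $u(t) \in \Conv_1 \cap \Conv_2$ and the asserted invariance follows.

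First I would establish the analogue of Lemma~\ref{lem:invariance} for $P_2$ along this $u$, namely
\[
    \norm{u(t) - P_2 u(t)}_H^2 - \norm{u(0) - P_2 u(0)}_H^2 = 2\int_0^t \Re\langle \dot u(s), u(s) - P_2 u(s)\rangle\, \d s .
\]
The proof mirrors that of Lemma~\ref{lem:invariance}: approximate $u$ by functions $u_n$ that are smooth in time and converge to $u$ in the sense of \eqref{eq:convergency_in_MR}, use that $P_2$ is an $H$-contraction to obtain $P_2 u_n \in H^1(0,T;H)$ and the pointwise orthogonality $\Re((P_2 u_n)\dot{}\,(s) \mid u_n(s) - P_2 u_n(s))_H = 0$ coming from \eqref{eq:characterization_orthogonal_projection}, and then pass to the limit. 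The boundedness of $u_n - P_2 u_n$ in $L^2(0,T;V)$ needed for the limit would be derived exactly as in \eqref{eq:V_norm_estimate}, using \eqref{1,2} together with quasi-coercivity and $V$-boundedness of $\fra$.

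I expect the main obstacle to be that \eqref{1,2} is only available for arguments in $V \cap \Conv_1$, so the generic smooth approximants furnished by \eqref{eq:Cinfty_dense_in_MR} will not suffice: if $u_n(t) \notin \Conv_1$ then $P_2 u_n(t)$ need not even lie in $V$, and then neither the duality identity $(\dot u_n \mid u_n - P_2 u_n)_H = \langle \dot u_n, u_n - P_2 u_n\rangle$ nor the coercivity estimate is available. I would circumvent this by choosing the approximants inside $\Conv_1$, for instance by time-regularisation $u_n = \rho_n * u$ with a mollifier $\rho_n$ (adapted near the endpoints of $[0,T]$): since $u(s) \in \Conv_1$ for all $s$ and $\Conv_1$ is closed and convex, each value $u_n(t)$ is an average of points of $\Conv_1$ and therefore lies in $\Conv_1 \cap V$, while still $u_n \to u$ in $L^2(0,T;V)$ and $\dot u_n \to \dot u$ in $L^2(0,T;V')$. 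With such approximants $P_2 u_n(t) \in V$ for every $t$, the criterion \eqref{1,2} applies to each $u_n(t)$, and the proof of Lemma~\ref{lem:invariance} goes through with $P$ replaced by $P_2$.

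Finally I would repeat the computation in the proof of Theorem~\ref{thm:invariance} with $P$ replaced by $P_2$. Substituting $\dot u = -\A(\cdot)u + f$ into the identity above, splitting $\fra(s, u(s), u(s) - P_2 u(s))$ as $\fra(s, u(s) - P_2 u(s), u(s) - P_2 u(s)) + \fra(s, P_2 u(s), u(s) - P_2 u(s))$, and invoking \eqref{1,2} (valid since $u(s) \in V \cap \Conv_1$ a.e.), the cross term together with the contribution of $f$ is non-positive, leaving
\[
    \norm{u(t) - P_2 u(t)}_H^2 \le 2\int_0^t -\Re\fra\big(s, u(s) - P_2 u(s), u(s) - P_2 u(s)\big)\, \d s \le 2\omega\int_0^t \norm{u(s) - P_2 u(s)}_H^2\, \d s,
\]
where the last inequality is quasi-coercivity and where $\norm{u(0) - P_2 u(0)}_H = 0$ because $u_0 \in \Conv_2$. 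Gronwall's lemma then gives $\norm{u(t) - P_2 u(t)}_H^2 = 0$, i.e.\ $u(t) \in \Conv_2$, for all $t \in [0,T]$. Combined with $u(t) \in \Conv_1$ this yields $u(t) \in \Conv_1 \cap \Conv_2$, proving the invariance of $\Conv_1 \cap \Conv_2$.
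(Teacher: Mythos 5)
Your proof is correct and follows essentially the same route as the paper's: both use the invariance of $\Conv_1$ to reduce the problem to showing $u(t)\in\Conv_2$, replace the generic smooth approximants of Lemma~\ref{lem:invariance} by time-mollifications $u_n=\rho_n\star u$ whose values remain in $\Conv_1\cap V$ by convexity (the paper extends $u$ by its endpoint values before mollifying, while you adapt the mollifier near the endpoints --- the same device), and then rerun the identity of Lemma~\ref{lem:invariance} and the Gronwall argument of Theorem~\ref{thm:invariance} with $P$ replaced by $P_2$, using \eqref{1,2} along the trajectory since $u(s)\in\Conv_1\cap V$ a.e. The obstacle you single out --- that \eqref{1,2} is only available on $V\cap\Conv_1$, so arbitrary approximants from \eqref{eq:Cinfty_dense_in_MR} will not do --- is precisely the point the paper's proof is built to address.
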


\begin{proof} Suppose  that $u_0 \in \Conv_1 \cap \Conv_2$. Since $\Conv_1$ is invariant for $(CP_f)$,  the solution $u$ satisfies  
$$ u(t) \in \Conv_1 \cap V \ {\rm for } \ a.e. t \in [0, T].$$
We extend $u$ to  $\tilde{u}$ on $\R$  by $u(0)$ on $(-\infty, 0)$ and $u(T)$ on $(T, \infty)$. Note that $u(0)$ and $u(T) \in V$ because of 
\eqref{eq:MR_cont_emb_in_C}. 
Take the  convolution $u_n  := \rho_n \star \tilde{u}$ with  a standard mollifier $\rho_n$. 
Then $u_n \in C^\infty (\R; V)$ and $u_n(t) \in \Conv_1$ for all $t \in \R$
and \eqref{1,2} holds for $v= u_n(t)$ for each $n$ and all $t \in [0,T]$. Using this sequence we obtain exactly as in the proof of Lemma \ref{lem:invariance} 
\[
        \norm{u(t)- P_2u(t)}_H^2 - \norm{u(r)-P_2u(r)}_H^2
            = 2 \int_r^t \Re \langle{\dot u(s)}, {u(s)-P_2u(s)}\rangle \ \d s.
    \]
 From this we can reproduce the  proof of Theorem \ref{thm:invariance}. 
  Indeed, 
 \begin{align*}
        \norm{u&(t)- P_2u(t)}_H^2\\
            &= 2 \int_0^t \Re \langle{\dot u(s)}, {u(s)-P_2u(s)} \rangle \ \d s\\
            &= 2 \int_0^t \big[ - \Re \fra(s,u(s),u(s)-P_2u(s))
                + \Re \langle{f(s)}, {u(s)-P_2u(s)} \rangle \big]  \ \d s\\
            &= 2 \int_0^t \big[ - \Re \fra(s,u(s)-P_2u(s),u(s)-P_2u(s)) \\
                &\quad - \Re \fra(s, P_2u(s),u(s)-P_2u(s)) + \Re \langle{f(s)}, {u(s)-P_2u(s)}\rangle \big] \ \d s\\
            &\le 2 \int_0^t - \Re \fra(s,u(s)-P_2u(s),u(s)-P_2u(s)) \ \d s. 
    \end{align*}
   where we use (\ref{1,2}) since $u(s) \in \Conv_1 \cap V$ for a.e.\ $s$. We use again quasi-coercivity and Gronwall's lemma to obtain
    $\norm{u(t)- P_2u(t)}_H^2 =0$ for all $t \in [0,T]$.
\end{proof}

\section{Positivity and comparaison}\label{section:positivity}

In this section we assume for simplicity that $\K=\R$ and
let  $H=L^2(\Omega,\mu)$ where $(\Omega, \mu)$ is a measure space.
For $f \in L^2(\Omega,\mu)$ we let $f^+(x) := \max\{ f(x),0 \}$, $f^-:=(-f)^+$, $\abs f = f^+ + f^-$.
We write $f \ge 0$ as short hand for $f(x) \ge 0$ $\mu$-a.e.
We keep the notations of the introduction; i.e.\ $V$ is a Hilbert space which is continuously and dense imbedded  into $H=L^2(\Omega,\mu)$
and $\fra : [0,T]\times V\times V \to \R$ is a non-autonomous closed form.
We say that $V$ is a \emph{sublattice} of $H$ if $v \in V$ implies $v^+ \in V$.
We let $L^2(\Omega, \mu)_+ := \{ g \in L^2(\Omega, \mu) : g \ge 0 \}$ and
$V_+ := L^2(\Omega, \mu)_+ \cap V$.
Given $f \in L^2(0,T;V')$, we say that $f$ is \emph{positive} and write $f \ge 0$ if $\langle f(t), v \rangle \ge 0$ $t$-a.e.\ for all $0 \le v \in V$.

\begin{proposition}\label{cor:positivity}
	Assume that $V$ is a sublattice of $H$ and $\fra(t,v^+,v^-) \le 0$ for a.e.\ $t$ and all $v\in V$.
	Let $u_0 \in V_+$ and $f \ge 0$.
	Then the solution $u$ of \eqref{eq:solution_of_CP} satisfies $u(t) \ge 0$ for all $t\in [0,T]$.
\end{proposition}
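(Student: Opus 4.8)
The plan is to apply Theorem~\ref{thm:invariance} with the closed convex set $\Conv = L^2(\Omega,\mu)_+$, the positive cone. This requires me to identify the orthogonal projection $P$ onto $\Conv$ and verify the two hypotheses of the theorem: that $P(V)\subseteq V$, and that the invariance criterion $\eqref{eq:invariance_criterium}$ holds for a.e.\ $t$ and all $v\in V$. Once both are checked, the theorem immediately gives $u(t)\in\Conv$, i.e.\ $u(t)\ge 0$, for all $t\in[0,T]$.

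First I would compute the projection. For $H=L^2(\Omega,\mu)$ the orthogonal projection onto the positive cone is the pointwise positive part, $Pv = v^+$; this is verified directly from the characterization $\eqref{eq:characterization_orthogonal_projection}$, since for any $y\ge 0$ one has $(v - v^+ \mid y - v^+)_H = (-v^-\mid y)_H + (v^-\mid v^+)_H = -(v^-\mid y)_H \le 0$, using that $v^+ v^- = 0$ pointwise. With $P = (\cdot)^+$ in hand, the hypothesis $P(V)\subseteq V$ is exactly the assumption that $V$ is a sublattice of $H$, so $\eqref{eq:V_invariant}$ holds for free.

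Next I would verify the invariance criterion. Writing $v - Pv = v - v^+ = -v^-$, the left-hand side of $\eqref{eq:invariance_criterium}$ becomes $\fra(t, v^+, -v^-) = -\fra(t, v^+, v^-)$, which is nonnegative precisely because of the standing assumption $\fra(t,v^+,v^-)\le 0$. For the right-hand side, $\langle f(t), v - Pv\rangle = -\langle f(t), v^-\rangle \le 0$, since $f\ge 0$ and $v^-\in V_+$ (using the sublattice property to ensure $v^-\in V$). Thus the required inequality $-\fra(t,v^+,v^-) \ge -\langle f(t), v^-\rangle$ holds, as its left side is $\ge 0$ and its right side is $\le 0$. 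One small point to record is that $u_0\in V_+\subseteq\Conv$, so the initial condition lies in the convex set as the theorem demands.

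I do not expect a serious obstacle here: the proposition is essentially a direct specialization of Theorem~\ref{thm:invariance} to the positive cone. The only genuine content is the identification $P=(\cdot)^+$ and the bookkeeping with positive and negative parts; the mild subtlety is making sure every quantity ($v^+$, $v^-$) actually lies in $V$, which is handled by the sublattice hypothesis. Everything else is a mechanical sign-check feeding into the already-proved theorem.
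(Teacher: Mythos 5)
Your proposal is correct and takes essentially the same route as the paper's own proof: both apply Theorem~\ref{thm:invariance} to the positive cone with $Pv=v^+$ and reduce the criterion \eqref{eq:invariance_criterium} to the sign check $\fra(t,v^+,v^-)\le 0\le\langle f(t),v^-\rangle$. Your additional verifications---that $P=(\cdot)^+$ via \eqref{eq:characterization_orthogonal_projection} and that $v^-\in V_+$ by the sublattice hypothesis---are details the paper leaves implicit, and they are carried out correctly.
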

\begin{proof}
    We take the closed convex set
    \[
        \Conv = \{ v \in L^2(\Omega, \mu) : v \ge 0 \}
    \]
    The orthogonal projection onto $ \Conv$ is given by $Pv =v^+$.
    By our assumptions we have
    \[
        \fra(t,v^+,v^-) \le 0 \le  \langle {f(t)}, {v^-} \rangle \quad (t \in [0,T]).
    \]
    This implies $\eqref{eq:invariance_criterium}$ and we apply Theorem~\ref{thm:invariance}.
\end{proof}

This proposition is known. It is formulated in Theorem 2 of
\cite[Chap.\ XVIII, § 5]{DL88}. However the proof there seems not
correct (one cannot take $v=-u^-$ in (4.49) since $u$ depends on
$t$). A correct proof is given in \cite{Tho03}.  Corollary \ref{cor:positivity} is also proved in the case of
elliptic operators with Dirichlet or Neumann boundary conditions
in \cite{DD97}.  In the autonomous case the criterion is also necessary,
see \cite{Ouh96}.

Next we consider the submarkovian property. 
For  $v \in L^2(\Omega,\mu)$, we set $v\wedge 1 = \inf\{v, 1\}$.  Then, $v\wedge 1, (v-1)^+ \in L^2(\Omega,\mu)$ and $v = v\wedge 1 + (v-1)^+$.
\begin{proposition}\label{sub}
	Assume that $v\wedge 1\in V$ and $\fra(t, v\wedge 1, (v-1)^+) \ge 0$ for all $t\in[0,T]$, $v\in V$ and that $f \le 0$. 
	Let $u_0 \in L^2(\Omega, \mu)$ such that $u_0 \le 1$ $\mu$-a.e.
	Then the solution $u$ of $\eqref{eq:solution_of_CP}$ satisfies $u(t)\le 1$ $\mu$-a.e.\ for all $t\in [0,T]$.
	In particular, if $u_0 \le 0$, then $u(t) \le 0$ for all $t\in [0,T]$.
\end{proposition}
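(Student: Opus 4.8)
The plan is to apply Theorem~\ref{thm:invariance} with the closed convex set
\[
\Conv = \{ v \in L^2(\Omega,\mu) : v \le 1 \},
\]
exactly mirroring the proof of Proposition~\ref{cor:positivity}. First I would identify the orthogonal projection $P : H \to \Conv$ as $Pv = v\wedge 1$. Indeed $v\wedge 1 \in \Conv$, and for any $y \le 1$ the pointwise product $(v - v\wedge 1)(y - v\wedge 1) = (v-1)^+\,(y-1)$ is the product of a nonnegative and a nonpositive function, so $(v - v\wedge 1 \mid y - v\wedge 1)_H \le 0$; by the characterization \eqref{eq:characterization_orthogonal_projection} this forces $Pv = v\wedge 1$. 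Note also that $v - Pv = v - v\wedge 1 = (v-1)^+$.

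With $P$ so identified, the hypothesis $v\wedge 1 \in V$ is precisely condition \eqref{eq:V_invariant}, and the invariance criterion \eqref{eq:invariance_criterium} reads
\[
\fra(t, v\wedge 1, (v-1)^+) \ge \langle f(t), (v-1)^+ \rangle \qquad (v \in V,\ \text{a.e.\ } t).
\]
Here the left-hand side is $\ge 0$ by assumption, while the right-hand side is $\le 0$ because $f \le 0$ and $(v-1)^+ \in V_+$; hence the criterion holds. Since $u_0 \le 1$ means $u_0 \in \Conv$, Theorem~\ref{thm:invariance} yields $u(t) \in \Conv$, i.e.\ $u(t) \le 1$, for all $t \in [0,T]$.

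For the final assertion I would run the same argument at every positive level. Using bilinearity of $\fra(t,\cdot,\cdot)$ together with the identities $v\wedge c = c\,((v/c)\wedge 1)$ and $(v-c)^+ = c\,((v/c)-1)^+$, valid for $c>0$, the hypothesis at level $1$ upgrades to
\[
\fra(t, v\wedge c, (v-c)^+) = c^2\, \fra\!\big(t, (v/c)\wedge 1, ((v/c)-1)^+\big) \ge 0,
\]
and $v\wedge c \in V$ whenever $v \in V$. Thus the preceding paragraph applies verbatim to $\Conv_c = \{v \le c\}$ for each $c>0$, so $u_0 \le c$ implies $u(t) \le c$ for all $t$. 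If now $u_0 \le 0$, then $u_0 \le c$ for every $c>0$, whence $u(t) \le c$ for all $t$ and all $c>0$; letting $c \downarrow 0$ gives $u(t) \le 0$.

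All the computations are routine; the only step requiring a genuine idea is this final assertion. One cannot argue directly with $\{v \le 0\}$, whose projection is $Pv = -v^-$ with $v - Pv = v^+$, since the hypotheses give no control on $\fra(t, -v^-, v^+)$ and the scaling identity above degenerates as $c \to 0$. The remedy is to establish invariance of $\{v \le c\}$ for each fixed $c>0$, where the hypothesis is available through the $c^2$ homogeneity, and only then pass to the limit $c \downarrow 0$.
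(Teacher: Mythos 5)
Your proof is correct and takes essentially the same route as the paper: Theorem~\ref{thm:invariance} applied to $\Conv=\{v\in L^2(\Omega,\mu): v\le 1\}$ with $Pv=v\wedge 1$, the criterion verified exactly as you do, and a homogeneity argument for the final assertion. The paper states the homogeneity step by scaling the solution and data (so that $u_0\le\lambda$ forces $u(t)\le\lambda$ for every $\lambda>0$, then $\lambda\downarrow 0$) rather than scaling the level set and the form's arguments as you do, but both implementations rest on the same bilinear scaling identity and limiting step, so the difference is purely cosmetic.
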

\begin{proof} We choose the convex set
$$\Conv = \{ v \in L^2(\Omega, \mu) : v \le 1 \}.$$
	The orthogonal projection $P$ from $L^2(\Omega,\mu)$ to $\Conv$ is given by $Pv = v \wedge 1$.
	Moreover, for $v\in V$,
	\[
		\fra(t, Pv, v-Pv) = \fra(t, v\wedge 1, (v-1)^+)  \ge 0 \ge \langle f, (v-1)^+ \rangle.
	\]
	Thus the first claim follows from Theorem~\ref{thm:invariance}.

	From homogeneity it follows that $u_0 \le \lambda$ $\mu$-a.e.\ implies $u(t) \le \lambda$ $\mu$-a.e.\ for all $\lambda >0$.
	If $u_0 \le 0$, it follows that $u(t)\le \lambda$ for all $\lambda >0$.
	Hence $u(t) \le 0$.
	Applying this to $-u_0$ instead of $u_0$ the claim follows.
\end{proof}

Next we investigate domination. For that we consider a second Hilbert space $W \stackrel d \hookrightarrow L^2(\Omega,\mu)$ 
and a closed non-autonomous form $\frb:[0,T]\times W \times W \to \R$.
We denote by $\B(t) \in \L(W,W')$ the operator given by $\langle \B(t)w, v\rangle = \frb(t,w,v)$ for $w,v \in W$.
We consider $L^2(\Omega,\mu) \hookrightarrow W'$ as before.
Then for all $w_0 \in W$ there exists a unique $w \in \MR(W,W')$ satisfying
\begin{equation}\label{eq:solution_of_CP_b}
\left\{
\begin{aligned}
	&\dot v(t) + \B(t)v(t)=g(t)& \quad t\text{-a.e.}\\
	&	v(0)=v_0.
\end{aligned}
\right.
\end{equation}
\begin{proposition}\label{dom}
	Assume that $V, W$ are sublattices of $H$ such that $V$ is an {\it ideal} of $W$ in the sense that $V \subset W$ and for $v \in V$, $w \in W$, 
	 $0\le w \le v$ implies $w\in V$. We  assume  furthermore that $\fra(t, v^+,v^-)\le 0$ for all $v\in V$ and $\frb(t, w^+,w^-)\le 0$ for all $w \in W$.
	Let $u_0 \in H_+$ and $v_0 \in W$ such that $u_0 \le v_0$ and $f,g\in L^2(0,T;V')$ such that $f\le g$.
	Then the solution $u$ of $\eqref{eq:solution_of_CP}$ and the solution $v$ of $\eqref{eq:solution_of_CP_b}$ satisfy 
\[
		u(t) \le v(t) \quad (t\in [0,T]),
\]
provided the forms satisfy  $\frb(t,u,v) \le \fra(t,u,v)$ for $a.e.\ t\in [0,T]$ and all $u,v\in V_+$.
\end{proposition}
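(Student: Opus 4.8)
The plan is to lift the comparison into a product space and then apply Theorem~\ref{thm:invariance2}. Put $\mathcal H:=H\times H$ and $\mathcal V:=V\times W$; since $V\stackrel d\hookrightarrow H$ and $W\stackrel d\hookrightarrow H$ we have $\mathcal V\stackrel d\hookrightarrow\mathcal H$. Define the non-autonomous form $\frc(t,(x_1,x_2),(y_1,y_2)):=\fra(t,x_1,y_1)+\frb(t,x_2,y_2)$ on $\mathcal V$; it is a non-autonomous closed form whose associated operator is the diagonal $(\A(t),\B(t))$. Hence the solution in $\MR(\mathcal V,\mathcal V')$ of the Cauchy problem $(\CP_{(f,g)})$ with initial value $(u_0,v_0)$ and right-hand side $(f,g)$ is exactly the pair $(u,v)$ from \eqref{eq:solution_of_CP} and \eqref{eq:solution_of_CP_b}. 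The conclusion $u(t)\le v(t)$ then amounts to the invariance of the closed convex set $\Conv_2:=\{(x,y)\in\mathcal H:x\le y\}$.

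First I would record positivity: both solutions stay nonnegative (Proposition~\ref{cor:positivity} applied to each equation, using $u_0\ge0$ and $v_0\ge u_0\ge0$), so that $\Conv_1:=H_+\times H_+$ is invariant for $(\CP_{(f,g)})$. I would then apply Theorem~\ref{thm:invariance2} to $\Conv_1$ and $\Conv_2$, for which it remains to verify, for every $(a,b)\in\mathcal V\cap\Conv_1$ (that is, $a\in V_+$ and $b\in W_+$), that the projection $P_2(a,b)$ lies in $\mathcal V$ and that the form inequality \eqref{1,2} holds.

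A pointwise computation gives, with $\varphi:=(a-b)^+$, that $P_2(a,b)=(a-\tfrac12\varphi,\,b+\tfrac12\varphi)$, so that $(a,b)-P_2(a,b)=(\tfrac12\varphi,-\tfrac12\varphi)$. For $P_2(a,b)\in\mathcal V$ I would invoke the ideal hypothesis: $a-b\in W$ and $W$ is a sublattice, so $\varphi\in W$; since $b\ge0$ we have $0\le\varphi\le a^+=a\in V$, and the ideal property then forces $\varphi\in V$. Consequently $a-\tfrac12\varphi\in V$ and $b+\tfrac12\varphi\in W$. This is precisely the step that consumes the sublattice and ideal assumptions.

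The substance is the inequality \eqref{1,2}. Writing $\psi:=a\wedge b=a-\varphi\in V_+$ and using $a=\psi+\varphi$, $b=\psi+(b-a)^+$, bilinearity reduces $\frc(t,P_2(a,b),(a,b)-P_2(a,b))\ge\langle(f,g),(a,b)-P_2(a,b)\rangle$ to the nonnegativity of
\[
  \tfrac12\bigl(\fra(t,\psi,\varphi)-\frb(t,\psi,\varphi)\bigr)
  +\tfrac14\bigl(\fra(t,\varphi,\varphi)-\frb(t,\varphi,\varphi)\bigr)
  -\tfrac12\,\frb(t,(b-a)^+,\varphi)
  +\tfrac12\langle g-f,\varphi\rangle .
\]
Each term is nonnegative: the first two by the domination $\frb\le\fra$ on $V_+\times V_+$ (note $\psi,\varphi\in V_+$); the third because $\frb(t,w^+,w^-)\le0$ for $w:=b-a\in W$, whose positive and negative parts are $(b-a)^+$ and $\varphi$; and the last because $f\le g$ and $\varphi\ge0$. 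Theorem~\ref{thm:invariance2} then gives invariance of $\Conv_1\cap\Conv_2$, i.e.\ $0\le u(t)\le v(t)$ for all $t\in[0,T]$. I expect this last paragraph to be the real obstacle: matching each cross term of the expanded form to exactly one of the three structural hypotheses (ideal/sublattice, domination on the positive cone, and the off-diagonal sign condition $\frb(w^+,w^-)\le0$) while keeping the roles of $V$ and $W$ straight is the delicate part, whereas the product set-up and the projection formula are routine.
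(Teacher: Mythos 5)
Your proof is correct and takes essentially the same route as the paper: the same product form $\frc$ on $V\times W$, positivity of both components via Proposition~\ref{cor:positivity} to make $\Conv_1$ invariant, and Theorem~\ref{thm:invariance2} applied to $\Conv_2=\{(x,y): x\le y\}$ with the same projection formula and the same use of the ideal/sublattice hypothesis to get $P_2(a,b)\in V\times W$. The only difference is bookkeeping: you expand around $\psi=a\wedge b$ and split the domination contribution into two terms, whereas the paper keeps the single term $[\fra-\frb](t,v-u,u)$ and isolates $\frb(t,v-u-(w+u),u)=-\tfrac12\frb(t,(w-v)^+,(w-v)^-)$, which is exactly your third term.
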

 \begin{proof}
 We consider the space $L^2(\Omega, \mu) \times L^2(\Omega,\mu)$ and the form
	\[
		\frc: [0,T]\times(V\times W) \times (V\times W) \to \R
	\]
	given by $\frc(t,(v_1,w_1),(v_2,v_2)) = \fra(t,v_1,v_2) + \frb(t,w_1,w_2)$.
	Consider the set
	\[
		\Conv := \{ (u,v)\in L^2(\Omega, \mu) \times L^2(\Omega,\mu) \colon  0 \le u \le v \}.
	\]
	Then $\Conv$ is closed and convex. The conclusion of the proposition follows from invariance of $\Conv$.\\
	Note that by assumptions and Proposition \ref{cor:positivity}, the convex set 
	\[ 
	\Conv_1 := \{ (u,v)\in L^2(\Omega, \mu) \times L^2(\Omega,\mu) \colon  0 \le u \ {\rm and} \  0 \le v \}
	\]
	is invariant. Hence by Theorem~\ref{thm:invariance2} we may restrict  attention to non-negative $u \in V$ and $v \in W$.
	The projection $P$ from $L^2(\Omega, \mu) \times L^2(\Omega,\mu)$ to $\Conv$ is given by
	$$P(u,v)=(u-\tfrac 1 2 (u-v)^+, v + \tfrac 1 2 (u-v)^+).$$
	Suppose now that $(v,w) \in V \times W$  with  $0 \le v$ and $0 \le v$. Set $u:=\tfrac 1 2 (v-w)^+$.
	We have $(u,v)-P(u,v)=(u,-u)$, $0\le v-u \le v$ and $0 \le u \le v$.
	Thus by assumption b) $v-u, u \in V$ and
	\begin{align*}
		\frc(t,P&(v,w),(v,w)-P(v,w))\\
			&= \fra(t, v-u, u) + \frb(t, w +u, -u)\\
			&= \fra(t, v-u, u) - \frb(t, v-u, u) + \frb(t,v-u - (w+u) ,u)\\
			&\ge 0\\
			& \ge  \langle f, u \rangle -\langle g, u \rangle\\
			&= \langle (f,g), (v,w)-P(v,w) \rangle
	\end{align*}
	by assumption d) and the inequality
	\[
		\frb(t,v-u - (w+u) ,u) = -\tfrac 1 2 \frb(t,(w-v)^+ , (w-v)^-) \ge 0
	\]
	 for a.e.\ $t\in [0,T]$. Now the claim follows from Theorem~\ref{thm:invariance2}.
\end{proof}

\section{Applications}\label{section:applications}
In this section we give some   applications to concrete   examples which illustrate our abstract results. In all cases, we deliberately consider typical, simple situations and do not aim for the greatest generality. \\

{\it I)  Elliptic operators with time-dependent coefficients.}
We consider elliptic operators of second order with time-dependent coefficients.  Let $\Omega$ be an open set of $\R^d$ and  consider on the real Hilbert space $L^2(\Omega,dx)$  the form
$$\fra(t,u,v) = \sum_{k,j=1}^d \int_\Omega a_{kj}(t,x) \partial_k u \partial_j v \ \d x$$
for $u, v \in V$ where $V$ is a closed subspace of $H^1(\Omega)$ which contains $H_0^1(\Omega)$.  Recall that $H^1(\Omega)$ and  $H_0^1(\Omega)$ are sublattices of 
$L^2(\Omega)$. We assume that the coefficients $a_{kj}$ are measurable and uniformly bounded on $[0,T] \times \Omega$ and satisfy the usual ellipticity condition
$$\sum_{k,j=1}^d a_{kj}(t,x) \xi_k \xi_j \ge \eta |\xi |^2$$
for  all $\xi = (\xi_1, ..., \xi_d) \in \R^d$ and for a.e. $(t,x) \in [0, T] \times \Omega$. Here $\eta > 0$ is a constant.
As before we denote by $\A(t)$ the associated operator and for given $f \in L^2(0,T; V')$ we denote  by $u \in \MR(V,V')$ the solution of the Cauchy problem \eqref{eq:solution_of_CP}.
\begin{proposition}\label{pos}
1) Suppose that $f \ge 0$ and $v^+ \in V$ for all $v \in V$. If $u_0 \ge 0$ then  $u(t) \ge 0$ for all $t \in [0, T]$.\\
2) Suppose that $f \le 0$ and $v\wedge 1 \in V$ for all $v \in V$. If $u_0 \le 1$ then $u(t) \le 1$ for all $t \in [0,T]$.
\end{proposition}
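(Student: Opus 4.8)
The plan is to reduce both assertions to the abstract invariance results already established, namely Proposition~\ref{cor:positivity} for part~1) and Proposition~\ref{sub} for part~2). In each case the structural lattice hypothesis is granted outright by the assumption in the statement ($v^+\in V$, respectively $v\wedge 1\in V$), and the boundedness and ellipticity of the coefficients $a_{kj}$ serve only to guarantee that $\fra$ is a non-autonomous closed form, i.e.\ that \eqref{eq:V_bounded}--\eqref{eq:measurable} hold. Thus the sole nontrivial point is to verify the form inequalities $\fra(t,v^+,v^-)\le 0$ and $\fra(t,v\wedge 1,(v-1)^+)\ge 0$.

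For part~1), I would invoke the Stampacchia differentiation rules for Sobolev functions: for $v\in H^1(\Omega)$ one has $\nabla v^+ = \1_{\{v>0\}}\nabla v$ and $\nabla v^- = -\1_{\{v<0\}}\nabla v$ almost everywhere. Since the sets $\{v>0\}$ and $\{v<0\}$ are disjoint, the pointwise product $\partial_k v^+\,\partial_j v^-$ vanishes a.e., so every integrand in
\[
    \fra(t,v^+,v^-)=\sum_{k,j=1}^d\int_\Omega a_{kj}(t,x)\,\partial_k v^+\,\partial_j v^-\ \d x
\]
is zero and hence $\fra(t,v^+,v^-)=0\le 0$. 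Together with $f\ge 0$, which gives $\langle f(t),v^-\rangle\ge 0$, this is exactly the hypothesis of Proposition~\ref{cor:positivity}, and positivity of $u$ follows. Note that the datum $u_0$ need only lie in the convex set $\Conv=\{v\ge 0\}$, so the conclusion holds already for $u_0\in H_+$, as asserted, by Theorem~\ref{thm:invariance}.

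For part~2) the argument is entirely parallel, this time through Proposition~\ref{sub} with $Pv=v\wedge 1$. Writing $v\wedge 1 = v-(v-1)^+$ and using $\nabla(v-1)^+=\1_{\{v>1\}}\nabla v$ gives $\nabla(v\wedge 1)=\1_{\{v<1\}}\nabla v$ a.e. The supports $\{v<1\}$ and $\{v>1\}$ are again disjoint, so $\partial_k(v\wedge 1)\,\partial_j(v-1)^+=0$ a.e.\ and therefore $\fra(t,v\wedge 1,(v-1)^+)=0\ge 0$. With $f\le 0$, so that $\langle f(t),(v-1)^+\rangle\le 0$, Proposition~\ref{sub} applies and yields $u(t)\le 1$.

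I expect the main, and really the only, obstacle to be the justification of the Stampacchia gradient formulas together with the fact that $\nabla v=0$ a.e.\ on every level set $\{v=c\}$; once these are invoked, the two form expressions collapse to $0$ and the sign conditions are immediate. Everything else is bookkeeping: checking that $\fra$ meets \eqref{eq:V_bounded}--\eqref{eq:measurable} and translating $f\ge 0$, respectively $f\le 0$, into the duality inequalities required by the cited propositions.
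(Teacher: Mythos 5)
Your proposal is correct and follows exactly the paper's own route: the paper proves Proposition~\ref{pos} in one line by citing Propositions~\ref{cor:positivity} and \ref{sub} together with the classical formulae $\partial_k v^+ = \chi_{\{v>0\}}\partial_k v$ and $\partial_k(v\wedge 1)=\chi_{\{v\le 1\}}\partial_k v$, and your argument just spells out the resulting disjoint-support cancellation $\fra(t,v^+,v^-)=0$ and $\fra(t,v\wedge 1,(v-1)^+)=0$. Your side remark that only $u_0\in H_+$ (rather than $u_0\in V_+$) is needed, via Theorem~\ref{thm:invariance}, is a correct and careful observation, but it does not change the approach.
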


\begin{proof} This is an immediate application of  Propositions \ref{cor:positivity} and \ref{sub} and the classical formulae
$$\partial_k v^+ = \chi_{\{ v > 0\}} \partial_k v, \ \partial_k (v\wedge 1) = \chi_{v \le 1} \partial_k v$$
for all $v \in H^1(\Omega)$. 
\end{proof}

The space $V$ incorporates the boundary condition. For example, if $V=H^1_0(\Omega)$, 
then we deal with Dirichlet boundary conditions whereas $V=H^1(\Omega)$ corresponds 
to Neumann boundary conditions in the sense that the conormal derivative $\frac{\partial u}{\partial\nu_A}$
(depending on the coefficients $a_{kj}$) vanishes at the boundary.
Since for $u \in H^1(\Omega)$, $v \in H^1_0(\Omega)$, $0 \le u \le v$ implies that $u \in H^1_0(\Omega)$
we deduce from Proposition~\ref{dom} the following.
\begin{proposition}
Suppose that $f\ge0$ and let $u_0\in L^2(\Omega)_+$. 
Denote by $\underline u \in \MR(H^1_0(\Omega), H^1_0(\Omega)')$
the solution of \eqref{eq:solution_of_CP} with respect to $V=H^1_0(\Omega)$
and by $\overline u \in \MR(H^1(\Omega), H^1(\Omega)')$
the solution of \eqref{eq:solution_of_CP} with respect to $V=H^1(\Omega)$.
Then $0 \le \underline u(t) \le \overline u(t)$ for all $t \in [0,T]$.
\end{proposition}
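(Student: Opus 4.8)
The plan is to read off the result from the domination Proposition~\ref{dom}, applied to one and the same elliptic form realized on two different spaces. Concretely I would set $W:=H^1(\Omega)$ and $V:=H^1_0(\Omega)$, let $\frb(t,\cdot,\cdot)$ be the elliptic form on $W$ and let $\fra(t,\cdot,\cdot)$ be its restriction to $V\times V$, and take the common datum $v_0:=u_0$ and common right-hand side $g:=f$. With this identification the solution $u$ of \eqref{eq:solution_of_CP} on $V$ is exactly $\underline u$ and the solution $v$ of \eqref{eq:solution_of_CP_b} on $W$ is exactly $\overline u$. Since the convex set in Proposition~\ref{dom} is $\Conv=\{(u,v):0\le u\le v\}$, its invariance means $(\underline u(t),\overline u(t))\in\Conv$ for all $t$, which is precisely the asserted chain $0\le\underline u(t)\le\overline u(t)$; in particular both the positivity and the comparison are obtained at once.

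The bulk of the argument is then verifying the hypotheses of Proposition~\ref{dom}. First, $H^1(\Omega)$ and $H^1_0(\Omega)$ are sublattices of $L^2(\Omega)$ (recalled in the text), $V\subset W$, and the ideal property is exactly the fact stated just above the statement: if $v\in H^1_0(\Omega)$, $w\in H^1(\Omega)$ and $0\le w\le v$, then $w\in H^1_0(\Omega)$. Second, the sign conditions $\fra(t,v^+,v^-)\le 0$ and $\frb(t,w^+,w^-)\le 0$ hold, in fact with equality: using $\partial_k v^+=\chi_{\{v>0\}}\partial_k v$ and $\partial_j v^-=-\chi_{\{v<0\}}\partial_j v$, the product $\partial_k v^+\,\partial_j v^-$ vanishes a.e.\ because $\{v>0\}$ and $\{v<0\}$ are disjoint, so every integrand in the form is zero. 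This is the same computation underlying Proposition~\ref{pos}.

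Third, the comparison hypothesis $\frb(t,u,v)\le\fra(t,u,v)$ for $u,v\in V_+$ is immediate, since by construction $\fra$ is the restriction of $\frb$ to $V\times V$, so the two forms agree on $V_+\times V_+$ and the inequality holds with equality. Finally, the data conditions $u_0\le v_0$ and $f\le g$ are trivial, as $v_0=u_0$ and $g=f$, while $u_0\in L^2(\Omega)_+$ provides the positivity $u_0\ge0$ required by Proposition~\ref{dom}. Granting these, Proposition~\ref{dom} applies verbatim and yields $0\le\underline u(t)\le\overline u(t)$ for all $t\in[0,T]$.

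The only genuine point to watch, which I expect to be the main (and rather mild) obstacle, is the bookkeeping with dual spaces. Because $V=H^1_0(\Omega)\subset W=H^1(\Omega)$, restriction of functionals from $W$ to $V$ gives a bounded map $W'\to V'$; for the Neumann problem \eqref{eq:solution_of_CP_b} to be well-posed one needs $f\in L^2(0,T;W')$, and such $f$ then automatically restricts to an element of $L^2(0,T;V')$ so that the Dirichlet problem \eqref{eq:solution_of_CP} is well-posed with the same $f$. Since the statement already presupposes that both $\underline u$ and $\overline u$ exist, this admissibility is built in, and no further work is needed. (If one prefers, the lower bound $0\le\underline u(t)$ can alternatively be obtained on its own from Proposition~\ref{cor:positivity}, using that $H^1_0(\Omega)$ is a sublattice and $\fra(t,v^+,v^-)\le0$.)
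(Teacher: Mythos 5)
Your proof is correct and takes essentially the same route as the paper: the paper likewise deduces the statement directly from Proposition~\ref{dom}, using exactly the ideal property of $H^1_0(\Omega)$ in $H^1(\Omega)$ that you invoke, with the same elliptic form realized on the two spaces. Your additional verifications (the vanishing of $\fra(t,v^+,v^-)$ via $\partial_k v^+\,\partial_j v^-=0$ a.e., and the restriction map $W'\to V'$ for the right-hand side) merely spell out details the paper leaves implicit.
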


{\it II)  A quasi-linear problem.}  In our second example we consider a quasi-linear problem for which we prove existence of a positive solution. \\
Let $\Omega$ be a bounded open set of $\R^d$ and let $H$ be the real-valued Hilbert space $L^2(\Omega, \d x)$ and let $V$ be a closed subspace of $H^1(\Omega)$ which contains $H_0^1(\Omega)$. If $V \not= H^1_0(\Omega)$ we assume that $\Omega$
has continuous boundary (in the sense of graphs) 
to ensure that the embedding 
of  $V$ in $H$ is compact.  This latter property is always true for  $V=H^1_0(\Omega)$ on any bounded domain $\Omega$. \\
For $j,k \in \{ 1, \dots, d \}$ let
\[ m_{kj} : [0,T] \times \Omega \times \R \to \R \]
be measurable functions such that $m_{kj}(t,x,.)$ is continuous for a.e.\ $(t,x)$. We assume furthermore  that there exists $\eta > 0$ such that
\[ 
	\sum_{k,j=1}^d m_{kj}(t,x,y) \xi_k \xi_j  \ge \eta |\xi |^2
\]
for all $t\in [0,T]$, $x\in \Omega$, $y \in \R$ and $\xi \in \R^d$.
Finally  we assume that $\abs{m_{kj}(t,x,y)} \le const$ for all 
$t\in [0,T]$, $x\in \Omega$, $y \in \R$.
The quasi-linear problem we consider here is the following:
\begin{equation}\label{eq:quasi_linear_problem}
(NCP)\left\{
\begin{aligned}
	&\dot u - \sum_{k,j=1}^d \partial_k(m_{kj}(t, x, u) \partial_j u)  = f(t) \quad t\text{-a.e.}\\
	&	u(0)=u_0 \in H\\
	&	u \in \MR(V, V')
\end{aligned}
\right.
\end{equation}
Here we assume that $f \in L^2(0,T, V')$ and $u_0\in H$ are given.
If $u \in \MR(V,V') \subset L^2(0,T;H)$, then $a_{kj}(t,x):= m_{kj}(t,x,u(t)(x))$
defines measurable functions for $j,k =1,\dots,d$ which satisfy  the assumptions of {I)}.
We denote by $\A_u(t)\in \L(V,V')$ the elliptic  operator with coefficients $a_{kj}(t,x)$.
A function $u \in \MR(V,V')$ is called a \emph{solution} of \eqref{eq:quasi_linear_problem} if
\[ \left\{
\begin{aligned}
&\dot u(t)+\A_u (t)u(t)=f(t) \quad \text{a.e.}\\
&u(0)=u_0.
\end{aligned}
\right.
\]
We shall prove existence of a solution to $(NCP)$ which in addition is non-negative if the initial data $u_0$ is non-negative. 
This will be done by a fixed point argument. 
Given $g \in L^2(0,T;H)$, consider the non-autonomous closed form
$\fra_g:[0,T]\times V \times V \to \R$ defined by 
\[ \fra_g(t,u,v) := \sum_{k,j=1}^d \int_\Omega m_{kj}(t,x,g(t)) \partial_k u \partial_j v \ \d x.\]
Note that we can choose constants such that
\eqref{eq:V_bounded}, \eqref{eq:quasi-coercive} holds 
(taking $\alpha:= \eta$, $\omega := \eta$ and $M$ appropriately). 
We denote by $\A_g(t)$ the operator associated with the form $\fra_g(t, ., .)$.
For $u_0 \in H$ and $f \in L^2(0,T;V')$ we know by Lions' theorem (see \eqref{eq:solution_of_CP}) that there exists a  
 unique solution  $u_g \in \MR(V, V')$ of the Cauchy problem
\[
\left\{
\begin{aligned}
	&\dot u_g(t) + \A_g(t)u_g(t)=f(t) \quad t\text{-a.e.}\\
		&u_g(0)=u_0.
\end{aligned}
\right.
\]
 In addition 
\begin{equation}\label{estV}
\norm{u_g}_{\MR(V,V')} \le C \left[ \norm{u_0}_H + \norm{f}_{L^2(0,T;V')} \right], 
\end{equation}
where $C > 0$ is a constant which does not depend on $g$, see \cite{Sho97}, Proposition 4.12 on p.\ 112 and 
subsequent comments.  We define the mapping $S:L^2(0,T;H) \to L^2(0,T;H)$ by $Sg := u_g$. 
By \eqref{estV}, the range of $S$ is bounded in $\MR(V,V')$. In addition, since we assume that $V$ is compactly embedded  into $H$ 
we deduce  from  the Aubin-Lions Lemma \cite[Proposition III.1.3]{Sho97} that the range of $S$ is relatively compact in $L^2(0,T, H)$. Thus it remains to prove that $S$ is continuous
to conclude  by Schauder's fixed point theorem that there exists  $u \in \MR(V,V')$ such that $Su=u$. This $u$ is a solution of \eqref{eq:quasi_linear_problem}.

Now we show that $S$ is continuous. Let $g_n \to g$ in $L^2(0,T;H)$ and set $u_n := Sg_n$. 
Since a sequence converges to a fixed element $u$ if and only if each subsequence has a subsequence converging to $u$ we may deliberately take subsequences.
Since $L^2(0,T;H)$ is isomorphic to $L^2((0,T)\times \Omega)$ we may assume (after taking a sub-sequence) that $g_n \to g$ for a.e.\ $(t,x)$. Furthermore since the sequence $u_n$ is bounded in $\MR(V,V')$ we may assume (after taking a sub-sequence) that 
$u_n \to u$ in $L^2(0,T;H)$ and $u_n \rightharpoonup u$ in $\MR(V,V')$.
Thus $\dot u_n \rightharpoonup \dot u$ in $L^2(0,T;V')$, $\partial_j u_n \rightharpoonup \partial_j u$ in $L^2(0,T;H)$ for all $j \in \{1, \dots d\}$ and $u_n(0) \to u(0)=u_0$ in $H$ since $\MR(V, V')$ is continuously embedded into $C([0,T], H)$. 
Since $g_n \to g$ for a.e.\ $(t,x)$ also 
$m_{kj}(t,x,g_n(t)(x)) \to m_{kj}(g(t,x,g(t)(x))$ for a.e.\ $(t,x)$ and all $j,k \in \{1, \dots d\}$. Finally $Sg_n = u_n$ is equivalent to
\begin{align*}
	\langle \dot u_n, v \rangle_{L^2(0,T;V'),L^2(0,T;V)} &+ \sum_{j,k=1}^d ( \partial_j u_n \mid m_{jk}(t,x,g_n) \partial_k v )_{L^2(0,T;H)} \\
	=&{} \langle f, v \rangle_{L^2(0,T;V'),L^2(0,T;V)} \quad (v \in L^2(0,T;V))
\end{align*}
and $u_n(0) = u_0$.
Since by the dominated convergence  theorem 
\[
	m_{jk}(t,x,g_n) \partial_k v \to m_{jk}(t,x,g) \partial_k v
\]
in $L^2(0,T;H)$, taking the limit as $n\to \infty$ yields
\begin{align*}
	\langle \dot u, v \rangle_{L^2(0,T;V'),L^2(0,T;V)} &+ \sum_{j,k=1}^d ( \partial_j u \mid m_{jk}(t,x,g) \partial_k v )_{L^2(0,T;H)} \\
	=&{} \langle f, v \rangle_{L^2(0,T;V'),L^2(0,T;V)} \quad (v \in L^2(0,T;V))
\end{align*}
and $u(0) = u_0$, which is equivalent to $Sg=u$. Hence $S$ is continuous and we have existence of a solution $u$. 

In order to prove positivity we observe  that for any $g \in L^2(0,T;H)$ we may apply Proposition~\ref{pos} to $u_g = Sg$ and deduce that each $u_g$ is positive. Consequently, 
also the fixed point $u$ is positive.  
We have proved the following result. 
\begin{proposition}\label{pos_ql}
For $u_0 \in H$ and $f \in L^2(0,T, V')$ there exists a solution $u$ to $(NCP)$. In addition $u$ satisfies the following assertions. \\
1) Suppose that $f \ge 0$ and $v^+ \in V$ for all $v \in V$. If $u_0 \ge 0$ then  $u(t) \ge 0$ for all $t \in [0, T]$.\\
2) Suppose that $f \le 0$ and $v\wedge 1 \in V$ for all $v \in V$. If $u_0 \le 1$ then $u(t) \le 1$ for all $t \in [0,T]$.
\end{proposition}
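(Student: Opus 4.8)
The plan is to reduce the quasi-linear problem $(NCP)$ to a fixed-point equation for the map $S\colon L^2(0,T;H)\to L^2(0,T;H)$, $Sg:=u_g$, that sends a frozen coefficient field $g$ to the solution $u_g$ of the \emph{linear} non-autonomous problem governed by $\fra_g$. This form is exactly of the type treated in case I) with coefficients $a_{kj}(t,x)=m_{kj}(t,x,g(t)(x))$, so the uniform ellipticity and boundedness hypotheses on the $m_{kj}$ make $\fra_g$ a non-autonomous closed form satisfying \eqref{eq:V_bounded} and \eqref{eq:quasi-coercive} with constants $\alpha=\eta$, $\omega=\eta$ and $M$ independent of $g$. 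Hence Lions' theorem makes $S$ well defined, and the a priori estimate \eqref{estV} bounds the range of $S$ in $\MR(V,V')$ uniformly in $g$. Since a fixed point $u=Su$ is by definition a solution of $(NCP)$, existence reduces to producing such a fixed point.

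To produce it I would invoke Schauder's theorem. The compact embedding $V\hookrightarrow H$ (valid under the continuous-boundary hypothesis, and always for $V=H^1_0(\Omega)$) combined with the Aubin--Lions lemma upgrades the uniform $\MR(V,V')$-bound from \eqref{estV} to relative compactness of the range of $S$ in $L^2(0,T;H)$. Restricting $S$ to the closed convex hull of its range, which is compact by \eqref{estV} and into which $S$ maps, reduces the existence of a fixed point to the continuity of $S$.

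The main obstacle is precisely this continuity, which I would establish by a subsequence argument. Given $g_n\to g$ in $L^2(0,T;H)\cong L^2((0,T)\times\Omega)$, it suffices to show every subsequence has a further subsequence along which $u_n:=Sg_n\to Sg$. After passing to subsequences I may assume $g_n\to g$ a.e.\ on $(0,T)\times\Omega$ and, using \eqref{estV} together with the compactness above, that $u_n\to u$ in $L^2(0,T;H)$ with $u_n\rightharpoonup u$ in $\MR(V,V')$; in particular $\partial_j u_n\rightharpoonup\partial_j u$ in $L^2(0,T;H)$ and $u_n(0)\to u_0$ in $H$. Continuity of $m_{kj}(t,x,\cdot)$ gives $m_{kj}(t,x,g_n)\to m_{kj}(t,x,g)$ a.e., and the uniform bound $\abs{m_{kj}}\le\const$ lets the dominated convergence theorem yield $m_{kj}(t,x,g_n)\partial_k v\to m_{kj}(t,x,g)\partial_k v$ \emph{strongly} in $L^2(0,T;H)$ for each test function $v\in L^2(0,T;V)$. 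Pairing this strong convergence with the weak convergence $\partial_j u_n\rightharpoonup\partial_j u$ allows me to pass to the limit in the weak formulation of $Sg_n=u_n$ and, by uniqueness for the $g$-frozen problem, to identify the limit as $u=Sg$. The delicate point is exactly this weak--strong pairing in the bilinear term, which is what forces the detour through a.e.\ convergence of the coefficients.

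Finally, for the positivity assertions I would argue not on the fixed point itself but on the whole range of $S$. For any $g$ the frozen form $\fra_g$ is of the type covered by Proposition~\ref{pos}: the identities $\partial_k v^+=\1_{\{v>0\}}\partial_k v$ and $\partial_k(v\wedge1)=\1_{\{v\le1\}}\partial_k v$ give $\fra_g(t,v^+,v^-)=0$ and $\fra_g(t,v\wedge1,(v-1)^+)=0$ by disjointness of supports. Hence under the hypotheses of assertion 1) every $u_g=Sg$ satisfies $u_g\ge0$, so the fixed point does; under the hypotheses of 2) every $u_g\le1$, so again the fixed point does. This is the step where Propositions~\ref{cor:positivity} and~\ref{sub}, applied uniformly across the frozen linear problems, transfer to the nonlinear conclusion.
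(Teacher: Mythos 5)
Your proposal is correct and follows essentially the same route as the paper: freezing the coefficients to define $S g := u_g$ via Lions' theorem with $g$-independent constants, using \eqref{estV} and the Aubin--Lions lemma for relative compactness in $L^2(0,T;H)$, proving continuity of $S$ by the same subsequence argument (a.e.\ convergence of $g_n$, weak convergence of $u_n$ in $\MR(V,V')$, dominated convergence in the coefficient term, weak--strong pairing in the bilinear form), applying Schauder, and deducing positivity by applying Proposition~\ref{pos} to every frozen solution $u_g$ so that the fixed point inherits it. Your explicit restriction of $S$ to the closed convex hull of its range is a small detail the paper leaves implicit, but it is the same argument.
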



{\it III) Non-autonomous Robin boundary conditions.} Our third example concerns the Laplacian with time dependent Robin boundary conditions. 
We suppose that $\Omega$ be a  bounded domain of $\R^d$ with Lipschitz boundary $\Gamma$. 
Denote by $\sigma$ be the $(d-1)$-dimensional Hausdorff measure on $\Gamma$.
Let  
\[
	\beta: [0,T]  \times \Gamma \to \R
\]
be a bounded measurable function.   We consider the symmetric form
\[
	\fra: [0,T] \times H^1(\Omega) \times H^1(\Omega) \to \R
\]
defined by 
\begin{equation}\label{formbeta}
	\fra(t, u, v) = \int_\Omega \nabla u \nabla v\ \d x + \int_\Gamma \beta(t, .) u v\ \d\sigma.
\end{equation}
In the second integral we omitted the trace symbol. 
The form $\fra$ is $H^1(\Omega)$-bounded and quasi-coercive. 
The first statement follows readily from the continuity 
of the trace operator and the boundedness of $\beta$. 
The second one is a consequence of the inequality 
\begin{equation}\label{trace-comp}
\int_\Gamma \lvert u \rvert^2 \ \d\sigma \le \epsilon \norm u_{H^1(\Omega)}^2 + c_\epsilon \norm u_{L^2(\Omega)}^2,
\end{equation}
which is valid for all $\epsilon > 0$ ($c_\epsilon$ is a constant depending on $\epsilon$). 
Note that $\eqref{trace-comp}$ is a consequence of compactness of the trace as an operator from $H^1(\Omega)$ into $L^2(\Gamma, \d \sigma)$, see \cite[Chap.\ 2 § 6, Theorem 6.2]{Nec67}.

The operator $A(t)$ associated with $\fra(t,.,.)$ on $H:= L^2(\Omega)$ 
is (minus) the Laplacian  with  time dependent Robin boundary conditions
\[
	\partial_\nu u(t) + \beta(t,.) u = 0 \text{ on } \Gamma.
\]
Here we use the following weak definition of the normal derivative.
Let $v \in H^1(\Omega)$ such that $\Delta v \in L^2(\Omega)$.
Let $h \in L^2(\Gamma, \d \sigma)$.  Then $\partial_\nu v = h$
by definition if
$\int_\Omega \nabla v \nabla w + \int_\Omega \Delta v w = \int_\Gamma h w \, \d \sigma$ for all $w \in H^1(\Omega)$.

Given $f \in L^2(0,T; H^1(\Omega)')$  and $u_0 \in L^2(\Omega)$, we denote by $u_\beta$ the solution of 
\begin{equation}\label{rob}
\left\{
\begin{aligned}
	&u \in \MR(H^1(\Omega),H^1(\Omega)') \text{ satisfying}\\
	&\dot u(t) + \A(t)u(t)=f(t) \quad t\text{-a.e.}\\
	&	u(0)=u_0.
\end{aligned}
\right.
\end{equation}
\begin{proposition}\label{proRob}
1) If $f \ge 0$ and $u_0 \ge 0$ then $u_\beta(t) \ge 0$ for all $t \in [0, T]$.\\
2) ({\it monotonicity}) If $\beta_1$ and $\beta_2$ are such that  $\beta_1(t, .)  \le \beta_2(t,.)$ for $t \in [0,T]$ and $u_0 \ge 0$, then 
$u_{\beta_2}(t) \le u_{\beta_1}(t)$ for all $t \in [0, T]$.
\end{proposition}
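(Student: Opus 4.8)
The plan is to derive both assertions as direct applications of the abstract positivity and comparison results, Propositions~\ref{cor:positivity} and~\ref{dom}, after verifying the required structural hypotheses for the Robin form~\eqref{formbeta}. Throughout, $V = H^1(\Omega)$ and $H = L^2(\Omega)$, and we recall that $H^1(\Omega)$ is a sublattice of $L^2(\Omega)$, with $v^+ \in H^1(\Omega)$ and $\partial_k v^+ = \chi_{\{v>0\}} \partial_k v$ for every $v \in H^1(\Omega)$.

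For part~1), the key point is to check the hypothesis $\fra(t,v^+,v^-) \le 0$ of Proposition~\ref{cor:positivity}. Substituting $u = v^+$, $w = v^-$ into~\eqref{formbeta}, the gradient term $\int_\Omega \nabla v^+ \nabla v^-\,\d x$ vanishes because $\nabla v^+ = \chi_{\{v>0\}}\nabla v$ and $\nabla v^- = -\chi_{\{v<0\}}\nabla v$ have disjoint supports; the boundary term $\int_\Gamma \beta(t,.) v^+ v^-\,\d\sigma$ likewise vanishes since the traces of $v^+$ and $v^-$ have disjoint supports on $\Gamma$ (the pointwise product $v^+ v^-$ is zero $\mu$-a.e., and taking traces respects the lattice operations). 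Hence $\fra(t,v^+,v^-) = 0 \le 0$, and since $v^+ \in H^1(\Omega)$ the sublattice condition holds; Proposition~\ref{cor:positivity} then gives $u_\beta(t) \ge 0$ whenever $f \ge 0$ and $u_0 \ge 0$.

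For part~2), I would apply the domination result Proposition~\ref{dom} with both forms defined on the \emph{same} space $W = V = H^1(\Omega)$, taking $\fra = \fra_{\beta_1}$ and $\frb = \fra_{\beta_2}$ and identical data $f = g$, $u_0 = v_0$. The ideal condition in Proposition~\ref{dom} is then trivial since $V = W$, and the off-diagonal negativity $\fra_{\beta_i}(t,w^+,w^-) \le 0$ is exactly the computation from part~1). The decisive hypothesis to verify is the form comparison $\frb(t,u,v) \le \fra(t,u,v)$ for all $u,v \in V_+$, i.e.\
\[
	\fra_{\beta_2}(t,u,v) - \fra_{\beta_1}(t,u,v) = \int_\Gamma \bigl(\beta_2(t,.) - \beta_1(t,.)\bigr) u v\,\d\sigma \le 0,
\]
which holds because the gradient terms cancel, $\beta_2 - \beta_1 \ge 0$ by assumption, and the traces of $u,v \ge 0$ are nonnegative on $\Gamma$, making the integrand nonnegative; the required inequality is therefore satisfied. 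Proposition~\ref{dom} then yields $u_{\beta_2}(t) \le u_{\beta_1}(t)$ for all $t \in [0,T]$.

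The main obstacle, and the only step requiring genuine care rather than bookkeeping, is the handling of the boundary integrals under the lattice operations: one must be sure that the trace of $v^+$ equals $(\mathrm{tr}\, v)^+$ $\sigma$-a.e.\ on $\Gamma$, so that the supports of the traces of $v^+$ and $v^-$ are disjoint and the comparison integrand keeps its sign. This compatibility of the trace operator with $(\cdot)^+$ follows from the continuity of the trace and an approximation by smooth functions, for which the inequality~\eqref{trace-comp} already guarantees the necessary control of the trace in terms of the $H^1$-norm.
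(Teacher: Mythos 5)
Your part 1) is correct and follows the paper's route exactly: the paper proves Proposition~\ref{proRob}.1 by citing Proposition~\ref{cor:positivity}, and your verification that $\fra(t,v^+,v^-)=0$ (disjoint supports of $\nabla v^+$ and $\nabla v^-$ in $\Omega$, and $(\mathrm{tr}\, v)^+(\mathrm{tr}\, v)^-=0$ on $\Gamma$ via compatibility of the trace with lattice operations) is precisely the computation the paper leaves implicit.

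In part 2), however, you have swapped the roles of $\fra$ and $\frb$ in Proposition~\ref{dom}, and your verification is in fact self-contradictory as written: you correctly compute that the integrand $(\beta_2-\beta_1)\,uv$ is nonnegative on $\Gamma$, and then assert that the required inequality $\fra_{\beta_2}(t,u,v)-\fra_{\beta_1}(t,u,v)\le 0$ ``is therefore satisfied'' --- but a nonnegative integral cannot be $\le 0$ unless it vanishes, and with your assignment ($\fra=\fra_{\beta_1}$, $\frb=\fra_{\beta_2}$) the hypothesis of Proposition~\ref{dom} simply fails; moreover, even if it held, the conclusion of Proposition~\ref{dom} would then be $u_{\beta_1}(t)\le u_{\beta_2}(t)$, the wrong direction. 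In Proposition~\ref{dom} the form $\fra$ governs the solution $u$ that ends up \emph{below}, and the hypothesis is $\frb(t,u,v)\le\fra(t,u,v)$ on $V_+$. Since the desired conclusion is $u_{\beta_2}(t)\le u_{\beta_1}(t)$, you must take $\fra=\fra_{\beta_2}$ and $\frb=\fra_{\beta_1}$; then for $u,v\in V_+$,
\[
	\frb(t,u,v)-\fra(t,u,v)=\int_\Gamma \bigl(\beta_1(t,.)-\beta_2(t,.)\bigr)\, u v\, \d\sigma \le 0,
\]
exactly because $\beta_1\le\beta_2$ and the traces of $u,v$ are nonnegative. This is a local bookkeeping error rather than a wrong approach: with the roles exchanged, the remainder of your verification (sublattice property, ideal condition trivial since $V=W=H^1(\Omega)$, $\fra_{\beta_i}(t,w^+,w^-)=0$, identical data $f=g$ and $u_0=v_0$) is correct and yields the proposition, which is exactly the paper's two-line proof with the details supplied.
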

\begin{proof} Assertion 1) is a consequence of Proposition \ref{cor:positivity}. Assertion 2) follows from Proposition \ref{dom} since the forms with $\beta = \beta_1$ or $\beta= \beta_2$ satisfy the assumptions of this proposition. \end{proof}

An interesting consequence  of the previous proposition is that if $\beta \ge 0$ and $f = 0$, then 
\begin{equation}\label{gauss}
u_\beta(t)(x) \le C t^{-d/2} e^t \int_\Omega \exp[- c \frac{|x-y|^2}{t}] u_0(y) \ \d y.
\end{equation}
Here $C$ and $c$ are positive constants. The reason is that $u_\beta(t) \le u(t)$ where $u$ is the solution of \eqref{rob} with $\beta = 0$. In the latter case, the operator $A(t)$ is time-independent and coincides with the Neumann Laplacian. It is well known that the heat kernel of this operator has a Gaussian upper bound, see \cite{Dav89}, Chapter 3 or \cite{Ouh05}, Chapter 6.  This gives \eqref{gauss}. \\
Similarly, if one considers \eqref{rob} with $f= 0$ and initial data $u(s) = u_0 \ge 0$ at some $s \ge 0$,  then the estimate becomes
\begin{equation}
u_\beta(t)(x) \le C (t-s)^{-d/2} e^{(t-s)}  \int_\Omega \exp[- c \frac{|x-y|^2}{t-s}] u_0(y) \ \d y.
\end{equation}

\begin{remark}
We could replace in Proposition \ref{proRob} the Laplacian by an  elliptic operator  in divergence form as in example I). The statement and the  comments following this proposition hold in this setting with  the same proof. 
\end{remark}


\section{Appendix: Vector-valued 1-dimensional Sobo\-lev spaces}

We summarize some results on Hilbert space-valued Sobolev spaces . Given $u\in L^2(0,T;H)$ a function
 $\dot u \in L^2(0,T;H)$ is called \emph{the weak derivative} of $u$ if
\[
	-\int_0^T  u(s) \dot \varphi (s) \ \d s = \int_0^T  \dot u (s)\varphi (s) \ \d s
\]
for all $\varphi \in C^\infty_c(0,T)$. Thus we merely test with scalar-valued test functions $\varphi$ on $(0,T)$. It is clear that the weak derivative $\dot u$ of $u$ is unique whenever it exists. We let
\[
	H^1(0,T;H):=\{u\in L^2(0,T;H):u \text{  has  a weak derivative  }  \dot u \in L^2(0,T;H)\}.
\]
It is easy to see that $H^1(0,T;H)$ is a Hilbert space for the scalar product
\[
	(u\mid v)_{H^1(0,T;H)}:=\int^T_0 \Big[ (u(t)\mid v(t))_H + (\dot u (t)\mid \dot v (t))_H \Big] \ \d t.
\]
As in the scalar case \cite[Section 8.2]{Bre11} one shows the following.

\begin{proposition}\label{prop:hauptsatz}
{\rm a)}
 Let $u\in H^1(0,T;H)$. Then there exists a unique $w\in C([0,T];H)$ such that $u(t)=w(t)$ a.e.\ and
 \[
	 w(t)=w(0)+\int^t_0 \dot u (s) \ \d s.
 \]
{\rm b)} Conversely, if $w \in C([0,T];H),v\in L^2(0,T;H)$ such that $w(t)=w(0)+\int^t_0 v(s) \, \d s$, then $w \in H^1(0,T;H)$ and $\dot w = v$.
\end{proposition}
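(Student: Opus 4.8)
The plan is to establish part (b) first, since it feeds into part (a), then to isolate the one genuinely vector-valued point as a lemma, and finally to deduce (a).

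For part (b) I would simply verify the defining identity of the weak derivative by Fubini. Given $w(t) = w(0) + \int_0^t v(s)\,\d s$ with $v \in L^2(0,T;H)$, continuity of $w$ is immediate. For $\varphi \in C_c^\infty(0,T)$ the contribution of the constant $w(0)$ vanishes because $\int_0^T \dot\varphi = \varphi(T)-\varphi(0) = 0$, and in the remaining double integral I would interchange the order of integration over the triangle $\{0 \le r \le s \le T\}$, using $\varphi(T)=0$, to obtain $-\int_0^T w(s)\dot\varphi(s)\,\d s = \int_0^T v(s)\varphi(s)\,\d s$. This is exactly the assertion $\dot w = v$, so $w \in H^1(0,T;H)$.

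The crux is the following lemma: if $g \in L^2(0,T;H)$ satisfies $\int_0^T g(s)\dot\varphi(s)\,\d s = 0$ in $H$ for every $\varphi \in C_c^\infty(0,T)$, then $g$ equals a constant a.e. I would reduce this to the scalar case by pairing: for each fixed $h \in H$ the scalar function $s\mapsto (g(s)\mid h)_H$ lies in $L^2(0,T)$ and has vanishing scalar weak derivative, so the classical one-dimensional result (\cite[Section 8.2]{Bre11}) yields a constant $c_h$ with $(g(\cdot)\mid h)_H = c_h$ a.e. The map $h \mapsto c_h$ is linear (conjugate-linear if $\K=\C$), and since $\abs{c_h} = \tfrac1T\abs{\int_0^T (g(s)\mid h)_H\,\d s} \le \tfrac{1}{\sqrt T}\norm{g}_{L^2(0,T;H)}\norm h_H$, it is bounded; by Riesz representation there is $c \in H$ with $c_h = (c\mid h)_H$. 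Thus $(g(s)-c \mid h)_H = 0$ for a.e.\ $s$, for each fixed $h$. Choosing $h$ in a countable dense subset (the essential range of $g$ is separable by Pettis measurability, so this costs nothing) produces a single null set off which $g(s)=c$, proving the lemma.

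Part (a) then follows quickly. Given $u \in H^1(0,T;H)$, set $\tilde w(t) := \int_0^t \dot u(s)\,\d s$; by part (b), $\tilde w \in H^1(0,T;H)$ with $\dot{\tilde w} = \dot u$, so $u - \tilde w$ has weak derivative $0$. The lemma supplies a constant $c \in H$ with $u - \tilde w = c$ a.e. Then $w := \tilde w + c$ is continuous, agrees with $u$ a.e., and satisfies $w(t) = w(0) + \int_0^t \dot u(s)\,\d s$ with $w(0)=c$. Uniqueness of the continuous representative is immediate, as two elements of $C([0,T];H)$ coinciding a.e.\ coincide everywhere. I expect the main obstacle to be the lemma, specifically assembling the pointwise scalar constants $c_h$ into a single $H$-valued constant while controlling the almost-everywhere exceptional sets uniformly in $h$, which is precisely what forces the Riesz plus separability argument above.
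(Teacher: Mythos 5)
Your proof is correct, and it follows the same overall blueprint as the paper, which gives no written proof but simply invokes the scalar case (``As in the scalar case \cite[Section 8.2]{Bre11} one shows the following''): part (b) by Fubini over the triangle $\{0\le r\le s\le T\}$, a constancy lemma for functions with vanishing weak derivative, then part (a) by subtracting the primitive of $\dot u$ and invoking the lemma. The one genuine divergence is how you prove the constancy lemma. You scalarize by duality: pair $g$ with each fixed $h\in H$, apply the classical one-dimensional result, reassemble the constants $c_h$ into a single $c\in H$ via boundedness and Riesz representation, and control the $h$-dependent null sets by essential separability of the range (Pettis). The route the paper points to transcribes Br\'ezis's Lemma 8.1 verbatim into the $H$-valued setting, which works because the test functions remain scalar: fix $\psi\in C_c^\infty(0,T)$ with $\int_0^T\psi=1$; for arbitrary $\varphi\in C_c^\infty(0,T)$ the function $\varphi-\bigl(\int_0^T\varphi\bigr)\psi$ has a primitive in $C_c^\infty(0,T)$, whence $\int_0^T g(s)\varphi(s)\,\d s=\bigl(\int_0^T g(s)\psi(s)\,\d s\bigr)\int_0^T\varphi(s)\,\d s$ in $H$ for all $\varphi$, so $g=\int_0^T g\psi=:c$ a.e.\ by the vector-valued du Bois-Reymond lemma (e.g.\ via mollification or Lebesgue points of Bochner integrable functions). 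The direct adaptation is shorter and needs neither Riesz nor any separability discussion; your reduction, in exchange, uses the scalar theorem purely as a black box, and the separability point you worry about costs nothing here in any case, since the paper assumes $H$ separable at the start of Section~\ref{section:invariance} (and Pettis, as you note, makes it free in general). All steps in your argument --- the Fubini computation in (b), the bound $\abs{c_h}\le T^{-1/2}\norm{g}_{L^2(0,T;H)}\norm{h}_H$, the countable-dense-set argument, and the uniqueness of the continuous representative --- are sound.
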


In the following we always identify $u\in H^1(0,T;H)$ with its unique continuous representative $w$ according to a).
We prove a vector-valued version of \cite[Proposition 9.3]{Bre11}.

\begin{proposition}\label{prop:H1_characterisation}
Let $u\in L^2(0,T;H)$. The following are equivalent:
\begin{enumerate}
\item[{\rm (i)}] $u\in H^1(0,T;H)$;
\item[{\rm (ii)}] there exists $C\ge 0$ such that for $0<c<d<T,\ \abs{h} < \min \{c,T-d\}$  one has  
	\[
		\int^d_c \norm{u(t+h)-u(t)}^2_H \ \d t \le C^2 \abs{h}^2 .
	\]
\end{enumerate}
In that case $C=\big(\int^b_a \norm{\dot u}^2_H \, \d t\big)^{1/2}$ is the optimal constant.
\end{proposition}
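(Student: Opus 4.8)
\textbf{Proof plan for Proposition~\ref{prop:H1_characterisation}.}
The plan is to mimic the scalar proof in \cite[Proposition 9.3]{Bre11} while being careful that the only test functions available are scalar-valued (as emphasized in the definition of the weak derivative above). The implication (i) $\Rightarrow$ (ii) is the easy direction. Assuming $u \in H^1(0,T;H)$, I would use part a) of Proposition~\ref{prop:hauptsatz} to write, for $t$ in the relevant range and $h$ small,
\[
	u(t+h) - u(t) = \int_t^{t+h} \dot u(s) \ \d s,
\]
an identity valid in $H$ by the fundamental-theorem-of-calculus representation. Taking $\norm{\cdot}_H$, applying the Cauchy--Schwarz (Jensen) inequality to bound $\norm{\int_t^{t+h} \dot u(s) \, \d s}_H^2 \le \abs h \int_t^{t+h} \norm{\dot u(s)}_H^2 \, \d s$, integrating over $t \in (c,d)$ and swapping the order of integration via Tonelli then yields the estimate with $C^2 = \int_0^T \norm{\dot u}_H^2 \, \d t$. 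This also shows the stated constant is admissible.

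For the substantial direction (ii) $\Rightarrow$ (i), the goal is to produce a weak derivative. I would fix $\varphi \in C_c^\infty(0,T)$ and study the difference quotients of $u$ against $\varphi$: for small $h$,
\[
	\int_0^T \frac{u(t+h)-u(t)}{h}\, \varphi(t) \ \d t
		= -\int_0^T u(t)\, \frac{\varphi(t)-\varphi(t-h)}{h} \ \d t,
\]
obtained by a discrete integration by parts (change of variable), where the support condition on $\varphi$ lets me ignore boundary terms for $h$ small. As $h \to 0$, the right-hand side converges in $H$ to $-\int_0^T u(t)\, \dot\varphi(t)\, \d t$ because $\frac{\varphi(\cdot)-\varphi(\cdot-h)}{h} \to \dot\varphi$ uniformly. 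The hypothesis (ii) controls the left-hand side: the difference quotients $D_h u(t) := \frac{u(t+h)-u(t)}{h}$ satisfy $\norm{D_h u}_{L^2((c,d);H)} \le C$ uniformly in $h$ on each compact subinterval, so by the Hilbert-space structure of $L^2(0,T;H)$ a sequence $D_{h_n} u$ converges weakly to some $v \in L^2(0,T;H)$ with $\norm{v}_{L^2} \le C$. Passing to the weak limit in the displayed identity identifies $\int_0^T v\,\varphi = -\int_0^T u\,\dot\varphi$ for every scalar test function $\varphi$, which is exactly the definition of $v = \dot u$; hence $u \in H^1(0,T;H)$.

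The main obstacle, and the point requiring the most care, is the weak-compactness extraction together with the localization in $(c,d)$: the estimate in (ii) is only assumed on compact subintervals with $h$ small relative to the distance to the endpoints, so I must first exhaust $(0,T)$ by intervals $(c,d)$, obtain $v$ on each via a diagonal subsequence, and check the limits are consistent so that $v$ is globally defined on $(0,T)$. Once $\dot u = v$ is established, the optimal-constant claim follows by combining the bound $\norm{v}_{L^2} \le C$ from weak lower semicontinuity of the norm with the reverse inequality already obtained in the (i) $\Rightarrow$ (ii) step, giving $C = \big(\int_0^T \norm{\dot u}_H^2\, \d t\big)^{1/2}$ as the smallest admissible constant. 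I expect no genuine difficulty beyond this bookkeeping, since the vector-valued setting differs from the scalar one only in replacing absolute values by $\norm{\cdot}_H$ and using weak compactness of balls in the Hilbert space $L^2(0,T;H)$ in place of the scalar reflexivity argument.
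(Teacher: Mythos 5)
Your proof is correct, and while your (i)$\Rightarrow$(ii) direction coincides with the paper's (the representation $u(t+h)-u(t)=\int_t^{t+h}\dot u(s)\,\d s$, Cauchy--Schwarz, Fubini), your (ii)$\Rightarrow$(i) takes a genuinely different route. You extract a weak limit $v$ of the difference quotients $D_h u$ from their uniform bound in $L^2((c,d);H)$, pass to the limit in the discrete integration-by-parts identity against scalar $\varphi\in C_c^\infty(0,T)$, and patch the local limits together by exhausting $(0,T)$. The paper instead tests $u$ against \emph{vector-valued} $v\in C^1_c((0,T);H)$: shifting the variable and applying Cauchy--Schwarz with hypothesis (ii), then letting $h\to 0$ on the smooth side, yields the global functional bound $\abs{\int_0^T(\dot v\mid u)_H\,\d t}\le C\norm v_{L^2(0,T;H)}$; density of $C^1_c(0,T;H)$ in $L^2(0,T;H)$ and the self-duality $L^2(0,T;H)'=L^2(0,T;H)$ (Riesz representation) then produce $\dot u$ in a single step, and the scalar-test-function form of the weak derivative required by the paper's definition is recovered by choosing $v(t)=\varphi(t)w$ with $w\in H$. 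The paper's duality argument sidesteps exactly the bookkeeping you flag as the main obstacle --- since the constant $C$ in (ii) is uniform over all subintervals $(c,d)$, the functional bound is already global, so no exhaustion, diagonal subsequence, or consistency check is needed --- whereas your compactness argument buys a direct realization of $\dot u$ as a weak limit of difference quotients, with the bound $\norm{\dot u}_{L^2}\le C$ (hence the optimal-constant claim) coming for free from weak lower semicontinuity of the norm. Two points you should make explicit when writing this up: upgrading $D_{h_n}u\rightharpoonup v$ in $L^2((c,d);H)$ to weak convergence in $H$ of the integrals $\int_0^T D_{h_n}u\,\varphi\,\d t$ is done by pairing with $g(t)=\varphi(t)w$ for arbitrary $w\in H$; and consistency of the local limits follows from uniqueness of the weak derivative (pair with $w\in H$ and use separability of $H$, as assumed in Section 2), after which letting $c\to 0$, $d\to T$ gives $v\in L^2(0,T;H)$ with $\norm v_{L^2(0,T;H)}\le C$. (Note also a typo in the statement itself, inherited from neither proof: the optimal constant should read $\bigl(\int_0^T\norm{\dot u}_H^2\,\d t\bigr)^{1/2}$, not $\int_a^b$.)
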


\begin{proof}
(i)$\Rightarrow$(ii).
Using at first the Cauchy-Schwarz inequality and then Fubini's Theorem we have for $h>0$
\begin{align*}
	\int^d_c \norm{u(t+h)&-u(t)}^2_H \ \d t\\
		&= \int^d_c \norm[\Big]{ \int^{t+h}_t \dot u (s) \ \d s }^2_H \ \d t \\
		&\le \int^d_c  \int^{t+h}_t \norm{\dot u(s)}^2 \ \d s \ \d t \cdot h\\
		&= h \Big[ \int^d_c \int^s_{s-h} \norm{\dot u(s)}^2_H \ \d t \ \d s 
				+ \int^{d+h}_d \int^s_d \norm{\dot u (s)}^2_H \ \d t \ \d s \Big] \\
		&\le h^2 \Big[ \int^d_c \norm{ \dot u (s) }^2_H \ \d s 
				+ \int^{d+h}_d \norm{ \dot u (s) }^2_H \ \d s  \Big]\\
		&\le h^2 \int^T_0 \norm{\dot u (s)}^2_H \ \d s.
\end{align*}
The proof for $h<0$ is similar.

(ii)$\Rightarrow$(i).
Let $v\in C^1_c((0,T);H)$. Let  $\supp v \subset [c,d] \subset [0,T]$ with $0<c<d<T$. 
Then for $\abs{ h } < \min \{c,T-d\}$,
\begin{align*}
\abs[\Big]{  \int_c^d (v(t+h)-v(t)\mid  u(t))_H \ \d t } 
	&= \abs[\Big]{ \int_c^d (v(t) \mid  u (t+h)-u(t))_H \ \d t   } \\
	&\le  C \abs{ h }  \norm{ v }_{L^2(0,T;H)} .
\end{align*}
It follows that
\[
	\abs[\Big]{ \int^T_0 (\dot v(t) \mid u (t))_H \ \d t } \le C \norm{v}_{L^2(0,T;H)}
\]
for all $v \in C^1_c (0,T;H)$. Since $C^1_c(0,T;H)$ is dense in $L^2(0,T;H)$ and since $L^2(0,T;H)^\prime = L^2(0,T;H)$ with respect to the natural duality, 
there exists $\dot u \in L^2(0,T;H)$ such that
\[
	- \int^T_0 (\dot v(t) \mid u(t))_H \ \d t 
		= \int^T_0 (v(t) \mid \dot u (t))_H \ \d t
\]
for all $v \in C^\infty_c(0,T;H)$. Given $\varphi \in C^\infty_c(0,T;H),v \in H$, choosing $v(t)=\varphi(t)v$ we deduce that
\[
	- \int^T_0 \dot \varphi (t)u(t) \ \d t = \int^T_0 \varphi (t) \dot u (t) \ \d t
\]
for all $\varphi \in C^\infty_c(0,T;H)$. Thus $u \in H^1(0,T;H)$.
\end{proof}

Now we come to the main point of this appendix which is needed in Section~\ref{section:invariance}.

\begin{theorem}\label{thm:appendix}
Let $S:H\to H$ be Lipschitz-continuous; i.e.
\[
\norm{S(x)-S(y)}_H \le L \norm{x-y}_H \quad (x,y \in H) ,
\]
where $L\ge 0$. Then $S \circ u \in H^1(0,T;H)$ for all $u \in H^1(0,T;H)$. Moreover, 
\[
	\norm{ (S\circ u)\dot{}\, }_{L^2(0,T;H)} \le L \norm{ \dot u }_{L^2(0,T;H)} .
\]
\end{theorem}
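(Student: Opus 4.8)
The plan is to reduce everything to the difference-quotient characterisation of $H^1(0,T;H)$ provided by Proposition~\ref{prop:H1_characterisation}, exploiting that the Lipschitz bound on $S$ passes directly to the increments of $S\circ u$. Write $v := S\circ u$ throughout.

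First I would check that $v$ genuinely lies in $L^2(0,T;H)$, so that Proposition~\ref{prop:H1_characterisation} is applicable. Since $S$ is Lipschitz it is continuous, hence $v$ is strongly measurable as the composition of a continuous map with the measurable function $u$. For integrability the Lipschitz estimate gives $\norm{S(u(t))}_H \le \norm{S(u(t))-S(0)}_H + \norm{S(0)}_H \le L\norm{u(t)}_H + \norm{S(0)}_H$, and since $u\in L^2(0,T;H)$ and constant functions are square-integrable over the finite interval $(0,T)$, we obtain $v\in L^2(0,T;H)$.

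The key step is the increment bound. For $0<c<d<T$ and $\abs h < \min\{c,T-d\}$ the Lipschitz property yields, pointwise, $\norm{v(t+h)-v(t)}_H = \norm{S(u(t+h))-S(u(t))}_H \le L\norm{u(t+h)-u(t)}_H$, so that
\[
  \int_c^d \norm{v(t+h)-v(t)}_H^2 \ \d t \le L^2 \int_c^d \norm{u(t+h)-u(t)}_H^2 \ \d t .
\]
Now I would apply the implication (i)$\Rightarrow$(ii) of Proposition~\ref{prop:H1_characterisation} to $u$ itself: since $u\in H^1(0,T;H)$, the right-hand integral is bounded by $\abs h^2 \int_0^T \norm{\dot u(s)}_H^2\, \d s$. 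Hence $v$ satisfies condition (ii) with constant $C = L\big(\int_0^T \norm{\dot u}_H^2\,\d s\big)^{1/2} = L\norm{\dot u}_{L^2(0,T;H)}$.

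Finally, the implication (ii)$\Rightarrow$(i) of the same proposition shows $v\in H^1(0,T;H)$. Moreover, since the optimal constant in (ii) equals $\norm{(S\circ u)\dot{}\,}_{L^2(0,T;H)}$ and we have exhibited the admissible constant $L\norm{\dot u}_{L^2(0,T;H)}$, the stated inequality $\norm{(S\circ u)\dot{}\,}_{L^2(0,T;H)} \le L\norm{\dot u}_{L^2(0,T;H)}$ follows at once. I do not expect a genuine obstacle here; the only point requiring a little care is the preliminary verification that $v\in L^2(0,T;H)$, namely its measurability and integrability, everything else being a mechanical transfer of the Lipschitz estimate through the characterisation.
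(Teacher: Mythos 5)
Your proof is correct and follows essentially the same route as the paper: the Lipschitz bound is transferred to the difference quotients and Proposition~\ref{prop:H1_characterisation} is applied in both directions, with the norm inequality extracted from the optimality of the constant in (ii). Your preliminary verification that $S\circ u\in L^2(0,T;H)$ is a small extra care the paper leaves implicit, but otherwise the arguments coincide.
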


\begin{proof}
Let $u\in H^1(0,T;H)$. Then by Proposition \ref{prop:H1_characterisation}
\[
	\int^d_c\norm{u(t+h)-u(t)}^2_H \ \d t \le \int^T_0 \norm{\dot u (t)}^2 \ \d t \cdot \abs{h}^2
\]
whenever $0<c<d<T,\ \abs{h} < \min \{c,T-d\}$. Thus
\begin{align*}
	\int^d_c \norm{ S(u(t+h))-S(u(t)) }^2_H \ d t 
		&\le L^2 \int^d_c \norm{u(t+h)-u(t)}^2_H \ \d t \\
		&\le L^2 \int^T_0 \norm{\dot u (t)}^2_H \ \d t \cdot \abs{h}^2.
\end{align*}
Now the claim follows from Proposition~\ref{prop:H1_characterisation}.
\end{proof}


\noindent
\emph{Wolfgang Arendt}, \emph{Dominik Dier}, Institute of Applied Analysis, 
University of Ulm, 89069 Ulm, Germany,\\
\texttt{wolfgang.arendt@uni-ulm.de}, \texttt{dominik.dier@uni-ulm.de}

\quad\\
\noindent
\emph{El Maati Ouhabaz,} Institut de Math\'ematiques (IMB), Univ.\  Bordeaux, 351, cours de la Libération, 33405 Talence cedex, France,\\ 
\texttt{Elmaati.Ouhabaz@math.u-bordeaux1.fr}

\end{document}